\title{\bf Torsion-free Abelian Groups revisited (2)}
\author {Phill Schultz}
\address[phill.schultz@uwa.edu.au]{The University of Western Australia}
\subjclass[2010]{20K15, 20K25, 20K30 } \keywords{ finite rank torsion--free abelian group; orbits of a group action; strongly indecomposable summand
}
\theoremstyle{plain}
\newtheorem{theorem}{Theorem}[section]
\newtheorem{corollary}[theorem]{Corollary}
\newtheorem{lemma}[theorem]{Lemma}
\newtheorem{proposition}[theorem]{Proposition}
\theoremstyle{definition}
\newtheorem{definition}[theorem]{Definition}
\newtheorem{notation}[theorem]{Notation}
\theoremstyle{remark}
\newtheorem{remark} [theorem]{Remark}
\begin{document}

\renewcommand{\leq}{\leqslant}
\renewcommand{\geq}{\geqslant}
\newcommand{\Aut}{\mathop{\mathrm{Aut}}\nolimits}
\newcommand{\End}{\mathop{\mathrm{End}}\nolimits}
\newcommand{\Ker}{\mathop{\mathrm{Ker}}\nolimits}
\newcommand{\Hom}{\mathop{\mathrm{Hom}}\nolimits}

\newcommand\bbQ{{\mathbb{Q}}}
\newcommand\bbZ{{\mathbb{Z}}}
\newcommand\bbN{{\mathbb{N}}}
\newcommand\bbT{{\mathbb{T}}}
\newcommand{\bbP}{\mathbb{P}}

\newcommand{\calQ}{{\mathcal Q}}
\newcommand{\calP}{\mathcal{P}}
\newcommand{\calC}{\mathcal{C}}
\newcommand{\calD}{\mathcal{D}}
\newcommand{\calA}{\mathcal{A}}
\newcommand{\calH}{\mathcal{H}}
\newcommand{\calS}{\mathcal{S}}
\newcommand{\calB}{\mathcal{B}}
\newcommand{\calX}{\mathcal{X}}
\newcommand{\calY}{\mathcal{Y}}
\newcommand{\calL}{\mathcal{L}}
\newcommand{\calM}{\mathcal{M}}
\newcommand{\calG}{\mathcal{G}}
\newcommand{\calT}{\mathcal{T}}
\newcommand{\calI}{\mathcal{I}}
\newcommand{\calJ}{\mathcal{J}}
\newcommand{\calE}{\mathcal{E}}
\newcommand{\calK}{\mathcal{K}}
\newcommand{\calF}{\mathcal{F}}
\newcommand{\calV}{\mathcal{V}}
\newcommand{\calU}{\mathcal{U}}

 \newcommand{\ov}{\overline}
\newcommand{\rank}{\operatorname{rank}}
\newcommand{\Coker}{\operatorname{Coker}}
\renewcommand{\Im}{\operatorname{Im}}
\newcommand{\type}{\operatorname{type}}
\newcommand{\lcm}{\operatorname{lcm}}
\newcommand{\Reg}{\operatorname{Reg}}
\newcommand{\Rep}{\operatorname{Rep}}
\newcommand{\Rind}{\operatorname{RegInd}}
\newcommand{\Jon}{\operatorname{Jon}}
\newcommand{\Sym}{\operatorname{Sym}}

\newcommand{\la}{\langle} 
\newcommand{\ra}{\rangle}      

\newcommand{\Sub}{\texttt{Subgroups}}
\newcommand{\QSub}{\texttt{Quasi-Subgroups}}
\newcommand{\Bases}{\texttt{Bases}}
\newcommand{\bases}{\texttt{Bases}}
\newcommand{\QBases}{ \texttt{Quasi-Bases}}
\newcommand{\Subgroup}{\texttt{Subgroup}}

\newcommand{\st}[1]{\la{#1}\ra_*}

\maketitle 
\centerline{This paper has been accepted for publication in Rend. Sem. Mat. Univ. Padova}

\begin{abstract}
\noindent
Let $G$ be a  torsion--free abelian group of finite rank.  The orbits of the action of $\Aut(G)$ on the set of maximal independent subsets of $G$ determine the indecomposable decompositions.  $G$ contains a direct sum of pure strongly indecomposable groups as a   subgroup of finite index.
\end{abstract}




\section{Introduction}
Let $G$ be a finite rank subgroup of    $V=\bbQ^{\bbN}$. In the first part of this paper, Sections 2, 3 and 4, I study the action of  $\Aut(G)$   on the maximal independent subsets of $V$ contained in $G$.  I show that the orbits of this action  determine the isomorphism classes of  indecomposable direct decompositions of $G$.

In the second part, Sections 5 and 6, I study  the action of  $\Aut(G)$ on the set of strongly indecomposable quasi--decompositions of $G$.  Each strongly indecomposable quasi--decomposition   determines an isomorphism class of  subgroups   of $G$  of finite index which are direct sums of strongly indecomposable pure subgroups. 
 
Finally, in Section 7, I initiate a programme to classify strongly indecomposable groups.

Among other  new results of this paper are a group theoretic proof of Lady's Theorem (Corollary \ref{Jtheorem}) which states  that $G$ has only finitely many non--isomorphic   summands, and an extension of the notion of regulating subgroup from   acd groups   to all finite rank groups, (Remark \ref{6.12}, (2)).
 \section{Notation }

Let $V=\bbQ^{\bbN}$   and denote by $\calV$ the set of finite rank additive subgroups of $V$.
   Given $G\in\calV$ and $S\subseteq V$, let   $\la S\ra $ be  the subgroup of $V$ generated by $S$,
  $[S] $ the  subspace of $V$ generated by $S$ and    $S_*= [S]\cap G$.
If $S\subseteq G,\ S_*$ is just   the pure subgroup of $G$  generated by $S$,  but in general we do not insist that $S\subseteq G$.
  
Note that $ \la S\ra$ is the group of all finite   {integral combinations} of $S$, 
 and $[S]$ is  the vector space of all finite   {rational combinations} of $S$. 
Since $S\subseteq V$ is integrally independent   if and only if $S$ is rationally independent,    we generally omit the adjective.

We identify endomorphisms and automorphisms of $G$ with their unique extensions to the vector space $[G]$.

If $ r\in\bbQ^*$, the non--zero rationals, the statement $r=a/b$ will imply that $a\in\bbZ^*$, the non--zero integers,  $b\in\bbN$, the natural numbers and $\gcd(a,\,b)=1$.

A \textit{type} is a group $\tau$ satisfying  $\bbZ\leq\tau\leq\bbQ$. Since $\tau/\bbZ\leq \bbQ/\bbZ\cong\prod_{p\in\bbP}\bbZ(p^\infty)$, 
$\tau/\bbZ$ is a torsion group of  $p$--rank at most 1 for each prime $p$.

 Let $a\in G\in\calV$. The \textit{type of $a$} in $G$, \[\type_G(a)=\{r\in\bbQ\colon ra\in G\}\] which is clearly a type.\footnote{This definition of $\type_G(a)$  is not the standard one, \cite[\S 85]{F2}, but is equivalent to it, as shown in \cite[\S 2.2]{M}.} 
When there is no ambiguity, we omit the subscript $G$.

  A  maximal independent subset of $G\in\calV$ is called a \textit{basis of $G$}.
 Let $\Bases(G)$ denote the set of bases of $G$. In particular, $\bases(G)\subseteq\Bases([G])$,
 the set of vector space bases of   $[G]$. 
 It is well known, see for example \cite[Theorem 16.3]{F1}, that   $\rank(G)$ is the cardinality of any  basis.  so $\rank(G)=\dim([G])$. The following proposition shows how the groups $\la B\ra,\  B_*$ and $[B]$, where $ B\in\Bases(G)$, are related.

\begin{proposition}\label{Prop1} Let   $ G\in\calV$ with $\rank(G)=k$ and let $B\in\Bases(G)$. 
 
\begin{enumerate}  
\item $\la B\ra$ is a free  subgroup of $G$ of rank $k$;
\item $[B]=[G]$ is a subspace of $V$ of dimension $k$;
\item  $B_*=G$;
\item   $G/\la B\ra$ and $[B]/G$ are   torsion groups, the latter being divisible.
 \end{enumerate}
\end{proposition}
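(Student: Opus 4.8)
The plan is to establish the four claims in order, using the fact recorded above that a subset of $V$ is integrally independent if and only if it is rationally independent, so that $B$ may be treated simultaneously as a free generating set and a candidate $\bbQ$--basis. For (1), I would note that $B\subseteq G$ forces $\la B\ra\leq G$, and that the independence of $B$ means $B$ freely generates $\la B\ra$; since $|B|=\rank(G)=k$ by the definition of a basis, $\la B\ra$ is free of rank $k$.

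For (2), the inclusion $[B]\subseteq[G]$ is immediate from $B\subseteq G$. For the reverse inclusion I would argue from the maximality of $B$: if some $g\in G$ were not a rational combination of $B$, then $B\cup\{g\}$ would be a rationally---hence integrally---independent subset of $G$ properly containing $B$, contradicting that $B$ is a \emph{maximal} independent subset of $G$. Hence $G\subseteq[B]$, so $[G]\subseteq[B]$ and $[B]=[G]$; its dimension is $|B|=k$ since $B$ is a basis of this space. Claim (3) then follows at once: by definition $B_*=[B]\cap G$, and substituting $[B]=[G]$ from (2) gives $B_*=[G]\cap G=G$, the last equality because $G\subseteq[G]$.

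For (4), both torsion assertions reduce to clearing denominators. Given $g\in G\subseteq[B]$, write $g=\sum_{b\in B}q_b\,b$ with each $q_b\in\bbQ$; choosing a common denominator $n\in\bbN$ yields $ng\in\la B\ra$, so $g+\la B\ra$ has finite order and $G/\la B\ra$ is torsion. The same computation applied to an arbitrary $v\in[B]=[G]$ shows $nv\in\la B\ra\leq G$, so $[B]/G$ is torsion. Finally $[B]$ is a $\bbQ$--vector space, hence a divisible group, and any quotient of a divisible group is divisible, so $[B]/G$ is divisible.

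There is no serious obstacle here, as the proposition is foundational and each step is routine once the definitions are unpacked. The one point that requires genuine care is the reverse inclusion in (2), where the \emph{maximality} of $B$ (not merely its independence) must be invoked, together with the equivalence of integral and rational independence, to conclude that $B$ rationally spans all of $G$; once that is in hand, (3) and the torsion part of (4) are formal consequences and divisibility is just the divisibility of a vector space passing to quotients.
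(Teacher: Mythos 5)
Your proof is correct and follows essentially the same route as the paper's: free generation of $\la B\ra$ for (1), maximality of $B$ forcing $[B]=[G]$ for (2), which immediately yields (3), and denominator--clearing for the torsion claims in (4). The only cosmetic difference is in (4), where the paper packages both torsion and divisibility via the isomorphism $[B]/\la B\ra\cong(\bbQ/\bbZ)^k$ (taking subgroups and factor groups) rather than arguing directly as you do, but the content is identical.
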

\begin{proof} (1) Each $a\in\la B\ra$ has a unique representation as  $a=\sum_{b\in B}n_bb,\ n_b\in\bbZ$.  

(2) $B$ is maximally independent in $[G]$.

(3) By definition, $  B_*\leq G$. Since   $[G]=[B],\ G\leq [B]\cap G= B_*$.

(4) $[B]/\la B\ra\cong\left(\bbQ/\bbZ\right)^k$ is a  torsion group;  $G/\la B\ra$ is a subgroup and $[B]/G$ a factor group.
  \end{proof}

 To justify the name basis,  we note that 
 bases of groups share several properties with bases of  vector spaces. In particular, they are independent spanning sets in the following sense:
 \begin{proposition}\label{bassisprops}  Let  $G\in\calV$  and $0\ne H\in\Sub (G)$. Then: 
 \begin{enumerate} \item Every basis of $G$ is a basis of $[G]$;
 \item  for every   $B\in\Bases([G])$, there is a minimum $m\in\bbN$ such that   $mB=\{mb\colon b\in B\}\in\Bases(G)$;
\item $H$ has a basis   of cardinality $\ell\leq \rank(G)$;  
   \item Every basis of $H$    extends to a basis of $G$;
  \item If   $B\in \Bases(G)$, then every $a\in G$ has a unique representation as $k^{-1}\sum_{b\in B}n_bb$ where $n_b\in\bbZ,\ k\in\bbN$ and $\gcd\{k,\,n_b\colon b\in B\}=1$.
 \end{enumerate}
 \end{proposition}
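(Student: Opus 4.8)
The plan is to treat the five parts in an order that lets later parts reuse earlier ones, beginning with the two purely linear-algebraic facts. For (1), I would note that a basis $B$ of $G$ is by definition a maximal independent subset of $G$, hence in particular an independent subset of the vector space $[G]$; Proposition \ref{Prop1}(2) already records that $[B]=[G]$, so $B$ spans $[G]$ and is therefore a vector-space basis of $[G]$. For (3), since $H$ is a nonzero subgroup of $G\subseteq V$ it lies in $\calV$, and because $\rank(G)=k$ is finite, every independent subset of $H$ has at most $k$ elements; a maximal one exists and is a basis of $H$ of cardinality $\ell=\dim([H])\le\dim([G])=k$, using $[H]\subseteq[G]$.

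For (4), I would observe that a basis $C$ of $H$ is an independent subset of $G$, and since independent subsets of $G$ are bounded in size by $k$, $C$ extends to a maximal independent subset $B$ of $G$ among those containing $C$. The point to verify is that such a $B$ is maximal among \emph{all} independent subsets of $G$, not merely among those containing $C$: any independent $B'\supseteq B$ in $G$ also contains $C$, so maximality within the smaller family already forces $B'=B$. Hence $B\in\Bases(G)$ extends $C$.

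Part (2) is where the arithmetic enters. Given a vector-space basis $B=\{b_1,\dots,b_k\}$ of $[G]$, I would first note that each $b_i\in[G]$ has a positive integer multiple in $G$ (clear denominators in an expression of $b_i$ as a rational combination of elements of $G$), so the set $\{n\in\bbZ\colon nb_i\in G\}$ is a nonzero subgroup $d_i\bbZ$ of $\bbZ$. Then $mb_i\in G$ for every $i$ precisely when each $d_i\mid m$, i.e.\ when $\lcm(d_1,\dots,d_k)\mid m$. Setting $m=\lcm(d_1,\dots,d_k)$ gives $mB\subseteq G$; since scaling preserves independence, $mB$ is an independent subset of $G$ of cardinality $k=\rank(G)$, hence maximal, so $mB\in\Bases(G)$, and the divisibility characterization shows $m$ is the least such natural number.

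Finally, for (5) I would use (1) to get, for $a\in G\subseteq[G]=[B]$, a unique representation $a=\sum_{b\in B}q_bb$ with $q_b\in\bbQ$. Let $k\in\bbN$ be the least common denominator of the $q_b$ and put $n_b=kq_b\in\bbZ$, so $a=k^{-1}\sum_{b\in B}n_bb$; if a prime $p$ divided $k$ and every $n_b$, then $k/p$ would be a smaller common denominator, contradicting minimality, so $\gcd\{k,n_b\colon b\in B\}=1$. For uniqueness, a second such representation $\ell^{-1}\sum_{b\in B} m_bb$ yields $n_b/k=m_b/\ell=q_b$ by uniqueness of the rational coefficients; writing $k=k_0t$ with $k_0$ the least common denominator forces $t\mid\gcd\{k,n_b\colon b\in B\}=1$, whence $k=\ell=k_0$ and $n_b=m_b$. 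The main obstacle throughout is the bookkeeping in (2) and (5)—isolating the correct denominator subgroup $d_i\bbZ$ and verifying that the least-common-denominator normalization is forced by the $\gcd$ condition—rather than any conceptual difficulty, since the linear-algebraic content is already supplied by Proposition \ref{Prop1}.
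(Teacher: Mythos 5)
Your proposal is correct and takes essentially the same route as the paper's proof: part (1) by maximal independence in $[G]$, part (2) via the lcm of the denominators (the paper's orders $m_b$ of the $b$ modulo $G$), parts (3) and (4) by extending independent sets, and part (5) via the least denominator $k$. The only differences are matters of detail rather than approach: in (4) you take a maximal independent set directly inside $G$ instead of extending in $[G]$ and then rescaling the new elements, and in (5) you spell out the uniqueness verification that the paper's proof leaves implicit.
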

\begin{proof} (1) If $B\in\Bases(G)$ , then $B$ is a maximal independent subset of $[G]$.

(2) Since $[G]/G$ is torsion, each $b\in B$ has finite order, say $m_b$, modulo $ G$. Let $m=\lcm\{m_b\colon b\in B\}$. Then $m$ is minimal such that $mB\in\Bases(G)$.

(3) $[H]\leq[G]$ and (1) imply $\ell\leq \rank(G)$.

(4) Let $C$ be a basis of $H$. Then $C$ is independent in $G$ and hence extends to a basis $B=C\cup D$. Hence there is a least $m\in\bbN$ such that $B=C\cup mD$ is a basis of $G$.

(5) Let $a\in G$. Since $B\cup\{a\}$ is integrally dependent, there exists a least $k\in\bbN$ such that $ka=\sum_{b\in B}n_bb\colon n_b\in\bbZ$. Hence $a=k^{-1}\sum_{b\in B}n_bb\in V$ and $\gcd\{k,\,n_b\colon b\in B\}=1$.
\end{proof}
 
We call the expression  $k^{-1}\sum_{b\in B}n_bb$ the \textit{$B$--representation of $a$}.

\section{Bases and Decompositions} 

To simplify the notation, from now on \lq decomposition\rq\  of a group means non--trivial direct decomposition and \lq partition\rq\ of a set means  partition into non--empty subsets.

Let $B\in \Bases(G)$ and let  $C\dot\cup D$ be a   partition of $B$. We say that $C\dot\cup D$ is a \textit{splitting partition of $B$}, and $B$ is a \textit{splitting basis},  if $G=C_*\oplus D_*$, while $B$ is an  \textit{indecomposable basis}  if $B$ has no splitting partition.

For clarification, note that 
$G$ can have both splitting and non--splitting bases. For example, let $G=\bbZ\oplus \bbQ$.  Then $B_1=\{(1,0),\,(0,1)\}$ is a splitting basis but $B_2=\{(1,0),\,(1,1)\}$ is a basis that is not splitting. However,  Proposition \ref{dirdec} shows that if $G$ is indecomposable then all bases are indecomposable. 

\begin{proposition}\label{dirdec}   Let $G\in\calV$ and $B\in\Bases(G)$. Then:
\begin{enumerate} \item For any partition $B=C\dot\cup D,\  C_*\cap D_*=\{0\}$, 
and   $C\dot\cup D$ is a splitting partition of $B$  if and only if $C_*+ D_*$  is pure in $G$.
\item  $G=H\oplus K$ if and only if $B$ has a splitting partition $B=C\dot\cup D$ with $H=C_*$ and $K=D_*$.
\end{enumerate}
\end{proposition}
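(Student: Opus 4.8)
The plan is to establish (1) directly from the definition $S_* = [S]\cap G$ and then to read off (2) from it.

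For (1) I would first note that, because $C$ and $D$ are disjoint subsets of the independent set $B$, their rational spans satisfy $[C]\cap[D]=\{0\}$: a common nonzero vector would give a linear dependence among the elements of $B$. Intersecting with $G$ yields $C_*\cap D_* = [C]\cap[D]\cap G = \{0\}$, which is the first assertion. Consequently the sum $C_*+D_*$ is direct, and by Proposition \ref{Prop1}(1) it has rank $|C|+|D| = |B| = \rank(G)$; hence it is a full-rank subgroup of $G$ and $G/(C_*+D_*)$ is a torsion group. Now $C\dot\cup D$ is splitting precisely when $C_*+D_* = G$, and since $G/(C_*+D_*)$ is torsion it vanishes iff it is torsion-free. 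Therefore the partition is splitting iff $G/(C_*+D_*)$ is torsion-free, i.e. iff $C_*+D_*$ is pure in $G$, which is exactly (1).

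For (2) the "if" direction is immediate: a splitting partition with $C_*=H$ and $D_*=K$ gives $G=C_*\oplus D_* = H\oplus K$ by the definition of splitting. For the converse I would first convert the requirement $C_*=H$ into a statement about the space $[G]$. As a direct summand, $H$ is pure in $G$, so $H=[H]\cap G$; combined with $C_*=[C]\cap G$ and $[C_*]=[C]$ this gives the equivalences $C_*=H\iff[C]=[H]$ and $D_*=K\iff[D]=[K]$. Thus producing the required splitting partition of $B$ is the same as splitting $B$, regarded as a basis of $[G]=[H]\oplus[K]$, into a basis of $[H]$ and a basis of $[K]$.

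The heart of the matter, and the step I expect to be the main obstacle, is exactly this alignment. The partition with $C_*=H$ and $D_*=K$ exists for $B$ only when each vector of $B$ lies in $[H]$ or in $[K]$, i.e. when $B$ is subordinate to the decomposition $G=H\oplus K$; for a basis whose vectors have nonzero components in both summands no partition can realize the prescribed $H,K$. I would therefore prove the converse by showing that the basis witnessing it is subordinate: by Proposition \ref{bassisprops}(4) a basis $C$ of $H$ extends to a basis of $G$, and replacing its complementary part by a basis $D$ of $K$ yields $B=C\dot\cup D\in\Bases(G)$ with $[C]=[H]$ and $[D]=[K]$. For this $B$ the equivalences above give $C_*=H$, $D_*=K$, and $G=H\oplus K=C_*\oplus D_*$ makes the partition splitting, so $B$ has the asserted splitting partition. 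The genuine content is thus concentrated in the forward direction's dependence on choosing $B$ compatibly with the summands; once the reduction $C_*=H\iff[C]=[H]$ is in hand, what remains is the transparent linear-algebra fact that a vector-space direct sum admits a basis refining into bases of its summands, and this is what sets up the correspondence between splitting partitions and decompositions used later in the paper.
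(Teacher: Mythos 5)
Your proof is correct and follows essentially the same route as the paper: part (1) via $[C]\cap[D]=\{0\}$ together with the rank/torsion/purity argument, and part (2) by assembling a basis $B=C\dot\cup D$ of $G$ from bases of the summands $H$ and $K$, with purity of direct summands giving $C_*=H$ and $D_*=K$. You merely make explicit what the paper leaves implicit --- the existential reading of ``$B$ has a splitting partition'' (the fixed basis cannot work in general, as the paper's own $(\Rightarrow)$ step silently re-chooses $B$) and the equivalence $C_*=H\iff[C]=[H]$ --- the lone blemish being the citation of Proposition \ref{Prop1}(1) for rank additivity, which is in any case elementary.
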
 
\begin{proof} (1) If $B=C\dot\cup D$ is any partition of $B$, then $[C]\cap [D]=0$, so $C_*\cap D_*=0$. Since $\rank C_* +\rank D_*=\rank(G),\ C_*\oplus D_*=G$ if and only if  $C_*+ D_*$ is pure in $G$.

(2) $ (\Rightarrow )$ Let $C\in\Bases(H)$ and $D\in\bases(K)$. Then $H=C_*$ and $K=D_*$ so $B=C\dot\cup D$ is a splitting basis for $G$.

$ (\Leftarrow ) $
Since $ C_*\oplus  D_*= G$ with $C\in\Bases(H)$ and $D\in\Bases(K),\ G=H\oplus K$.
\end{proof}

   Proposition \ref{dirdec}   is most useful in the contrapositive, which we state for future reference.
\begin{corollary} \label{contra} Let $G\in\calV$. Then
  $G$  is indecomposable if and only if 
for all $B\in\Bases(G)$ and for all partitions $B=C\dot\cup D,\ C_*\oplus D_*$ is a subgroup of $G$ with non--zero 
 torsion quotient.
\qed
\end{corollary}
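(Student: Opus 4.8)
The plan is to obtain the statement as the literal contrapositive of Proposition \ref{dirdec}, so almost all the work is already done; the only genuine task is to translate the purity criterion of part (1) into the language of the torsion quotient. First I would record that for \emph{any} partition $B = C \dot\cup D$ of \emph{any} basis $B \in \Bases(G)$, Proposition \ref{dirdec}(1) gives $C_* \cap D_* = \{0\}$, so the sum $C_* + D_*$ is automatically direct and $C_* \oplus D_*$ is a genuine internal subgroup of $G$. This disposes of the ``is a subgroup of $G$'' clause once and for all, independently of whether $G$ is decomposable.

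Next I would observe that this subgroup always has full rank. Since $C \dot\cup D$ partitions $B$ and $C, D$ are independent, $\rank(C_*) + \rank(D_*) = |C| + |D| = |B| = \rank(G)$, so $\rank(C_* \oplus D_*) = \rank(G)$. Two finite rank subgroups of equal rank have torsion quotient, whence $G/(C_* \oplus D_*)$ is \emph{always} a torsion group; this is exactly the rank computation already invoked in the proof of Proposition \ref{dirdec}(1). Consequently the only thing that can vary from one partition to another is whether this (unavoidably torsion) quotient is zero or non--zero.

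The heart of the matter is then the equivalence between ``zero torsion quotient'' and ``splitting.'' Because $C_* \oplus D_*$ has full rank, it is pure in $G$ if and only if it equals $G$, i.e. if and only if $G/(C_* \oplus D_*) = 0$; indeed Proposition \ref{dirdec}(1) states precisely that $C_* \oplus D_* = G$ iff $C_* + D_*$ is pure, and this in turn is the definition of $C \dot\cup D$ being a splitting partition. Thus a partition is splitting exactly when $G/(C_* \oplus D_*)$ vanishes, and fails to be splitting exactly when that torsion quotient is non--zero.

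Finally I would assemble the contrapositive. By Proposition \ref{dirdec}(2), $G$ is decomposable iff some basis admits a splitting partition, which by the previous paragraph happens iff for some $B$ and some partition the torsion quotient $G/(C_* \oplus D_*)$ is zero. Negating both sides yields that $G$ is indecomposable iff for every $B \in \Bases(G)$ and every partition $B = C \dot\cup D$ the subgroup $C_* \oplus D_*$ has non--zero torsion quotient, which is the assertion. I do not anticipate any real obstacle: the single nontrivial inference is the full--rank purity step converting Proposition \ref{dirdec}(1)'s purity condition into the quotient condition, and even that is already implicit in the earlier proof, so the argument is essentially a bookkeeping exercise in negation.
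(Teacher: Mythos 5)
Your proposal is correct and takes exactly the route the paper intends: the corollary is stated there with no proof at all, being regarded as the immediate contrapositive of Proposition \ref{dirdec}. Your write-up merely fills in the bookkeeping the paper leaves tacit (the sum is always direct, always of full rank hence with torsion quotient, and pure if and only if equal to $G$, i.e.\ if and only if the quotient vanishes), so it matches the paper's argument rather than offering a different one.
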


It is now routine to extend these results to complete decompositions of $G$. Let $B\in\Bases(G)$, and let $\calB=\dot\cup_{i\in[t]}B_i$ be a splitting partition of $B$. Denote the corresponding decomposition  $\bigoplus_{i\in[t]} (B_i)_*$ of $G$ by $G(\calB)$.

Splitting partitions $\calB=\dot\cup_{i\in[t]} B_i$ and $\calC=\dot\cup_{j\in[s]}C_j$ of $B\in\bases(G)$ are isomorphic, denoted $\calB\cong\calC$, if $t=s$ and there is a permutation $\pi$ of $t$ and isomorphisms $\alpha_i$ such that each $(C_{i*})\alpha_i=(B_{i\pi})_*$.

Let $G\in\calV$ and $B\in\Bases(G)$. A  decomposition $G=\bigoplus_{i\in[t]} A_i$ is \textit{complete}  if each $A_i$ is  indecomposable.  A   partition $\calB=\dot\cup_{i\in[t]} B_i$ of $B$ is 
  a \textit{complete  splitting  partition}  if  $G(\calB)=\bigoplus_{i\in[t]}(B_i)_*$   is a complete   decomposition of $G$. 
   
\begin{proposition}\label{extension} Let $G,\, A_i\colon i\in[t] \in\calV$.  The following are equivalent:
\begin{enumerate}
\item $G=\bigoplus_{i\in[t]} A_i$ is a complete decomposition;
\item For each $B_i\in\Bases(A_i),\ B=\dot\cup B_i$ is a complete  splitting partition such that for all $i\in[t],\ A_i=B_{i*}$;
\item $G$ has a basis $B=\dot\cup_{i\in[t]} B_i$ such that $\bigoplus_{i\in[t]}(B_i)_*$ is a complete decomposition which is pure in $G$.
\end{enumerate}
\end{proposition}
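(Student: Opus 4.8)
The plan is to prove the cyclic chain $(1)\Rightarrow(2)\Rightarrow(3)\Rightarrow(1)$, in each step reducing to the two--summand analysis already carried out in Proposition \ref{dirdec}, together with the basis facts of Propositions \ref{Prop1} and \ref{bassisprops}.

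For $(1)\Rightarrow(2)$, I would fix arbitrary bases $B_i\in\Bases(A_i)$ and put $B=\dot\cup_{i\in[t]}B_i$. The decomposition $G=\bigoplus_i A_i$ makes the subspaces $[B_i]=[A_i]$ independent in $[G]$, so $B$ is independent; and $|B|=\sum_i|B_i|=\sum_i\rank(A_i)=\rank(G)$, whence $B$ is a maximal independent subset of $G$, i.e.\ $B\in\Bases(G)$. Since each summand $A_i$ is pure in $G$, the definition of the pure hull gives $(B_i)_*=[B_i]\cap G=[A_i]\cap G=A_i$, so $G(\calB)=\bigoplus_i(B_i)_*=\bigoplus_i A_i=G$ with each factor indecomposable. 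Thus $\calB$ is a complete splitting partition satisfying $(B_i)_*=A_i$, which is exactly $(2)$; note that this argument is valid for every choice of the $B_i$, as $(2)$ requires.

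The implication $(2)\Rightarrow(3)$ is immediate, since the basis $B=\dot\cup B_i$ provided by $(2)$ already satisfies $\bigoplus_i(B_i)_*=G$, a complete decomposition, and $G$ is pure in itself. For $(3)\Rightarrow(1)$ I would set $S=\bigoplus_i(B_i)_*$. Each $(B_i)_*$ has rank $|B_i|$, so $\rank S=\sum_i|B_i|=|B|=\rank(G)$; hence $[S]=[G]$ and $G/S$ is torsion by Proposition \ref{Prop1}(4). I then upgrade purity to equality: given $g\in G$, choose $n\in\bbN$ with $ng\in S$; purity gives $ng\in S\cap nG=nS$, and torsion--freeness of $G$ forces $g\in S$. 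Therefore $G=S=\bigoplus_i(B_i)_*$ is a complete decomposition, which is $(1)$ with $A_i=(B_i)_*$.

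The only step beyond routine bookkeeping is this last one, where the purity hypothesis of $(3)$ must be promoted to the identity $S=G$; the remaining implications rest directly on the two--summand case of Proposition \ref{dirdec}. In $(1)\Rightarrow(2)$ one could instead induct on $t$, peeling off $A_1$ via Proposition \ref{dirdec}(2) and applying the inductive hypothesis to $\bigoplus_{i\geq 2}A_i$, but the direct rank computation above sidesteps the extra notation.
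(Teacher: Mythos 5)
Your proof is correct, but it is not the argument the paper gives. The paper proves all three equivalences at once by induction on $t$: the case $t=2$ is exactly Proposition \ref{dirdec}, and for $t=k+1$ it peels off $A_1$, writes $G=A_1\oplus H$ with $H=\bigoplus_{i>1}A_i$, and applies the inductive hypothesis to $H$. You instead prove the cycle $(1)\Rightarrow(2)\Rightarrow(3)\Rightarrow(1)$ directly for arbitrary $t$, with no induction: in $(1)\Rightarrow(2)$ you use independence of the subspaces $[A_i]$ plus rank counting to see $B\in\Bases(G)$, and purity of summands to get $(B_i)_*=[A_i]\cap G=A_i$; in $(3)\Rightarrow(1)$ you count ranks to see that $S=\bigoplus_i(B_i)_*$ is full in $G$ and then promote purity of $S$ to $S=G$ via $ng\in S\cap nG=nS$ and torsion-freeness. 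That last step is really the $t$-summand generalisation of the purity criterion inside Proposition \ref{dirdec}(1), so the mathematical content overlaps, but your route makes explicit what the paper's very terse induction leaves implicit --- in particular, the paper's inductive step is silent about how hypothesis $(3)$ for $G$ (a basis condition not given in terms of the $A_i$) descends to the complement $H$, whereas your direct argument never needs such a descent. One small point to flag either way: in $(3)\Rightarrow(1)$ you obtain $(1)$ with $A_i=(B_i)_*$ rather than for the originally fixed $A_i$; this matches the intended (if loosely stated) reading of the proposition, but it is worth saying explicitly, as you do.
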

\begin{proof}  By Proposition  \ref{dirdec}, for all parts of the proposition, the statement holds  if $t=2$. 

Assume that each statement holds if $t=k$ and let $t=k+1$. Let  $H=\bigoplus_{i>1}A_i$, so $G=A_1\oplus H$. Then all parts hold with $G$ replaced by $H$, and hence by Proposition  \ref{dirdec}, all parts hold for $G$.
\end{proof}

\section{Automorphisms of $G$}

We first note without proof some well known properties of $\Aut(G)$. For any $\alpha\in\Aut(G)$ and any set $S\subseteq G,\   S\alpha$ denotes the set $\{s\alpha\colon s\in S\}$. 
\begin{lemma}\label{properties} Let $\alpha\in\Aut(G),\ a\in G,\  S\subseteq G,\ H$  and $K\in\Sub(G) $ and $B\in\Bases(G)$. 
\begin{enumerate} \item  $S_*\alpha=(S\alpha)_*$;
\item $H\cap K=0$ if and only if $H\alpha\cap K\alpha=0$;
 \item $\type(a)=\type(a\alpha)$
 \item Let  $r\in\bbQ,\ n_b\in\bbZ$ for all $b\in B$. Whenever either side is defined, so is the other and $(r\sum_{b\in B} n_bb)\alpha=r\sum_{b\in B} n_b(b\alpha)$.
 \qed\end{enumerate}
 \end{lemma}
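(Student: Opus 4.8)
The plan is to reduce all four statements to a single structural observation and then read each one off from standard facts about linear bijections. By the identification fixed in Section 2, I regard $\alpha$ as a $\bbQ$--linear automorphism of the vector space $[G]$ that restricts to a bijection of $G$ onto itself; in particular $\alpha$ is injective on $[G]$, $G\alpha=G$, and hence $[G]\alpha=[G]$. The three facts I would extract and use repeatedly are: $\alpha$ carries the subspace $[S]$ spanned by a set $S$ onto the subspace $[S\alpha]$ spanned by its image; being injective, $\alpha$ commutes with intersection, so $(X\cap Y)\alpha=X\alpha\cap Y\alpha$; and, being surjective onto $G$, an element lies in $G$ if and only if its image under $\alpha$ does. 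Each of these is elementary linear algebra once the identification is invoked.

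For (1), since $S\subseteq G$ we have $S_*=[S]\cap G$, and then
\[
S_*\alpha=([S]\cap G)\alpha=([S]\alpha)\cap(G\alpha)=[S\alpha]\cap G=(S\alpha)_*,
\]
using in turn that $\alpha$ commutes with intersection, that $[S]\alpha=[S\alpha]$ and $G\alpha=G$, and finally the definition of $(\,\cdot\,)_*$. Part (2) is immediate from injectivity: $(H\cap K)\alpha=H\alpha\cap K\alpha$, and an injective map sends a subgroup to the zero subgroup precisely when that subgroup is already zero, so $H\cap K=0$ if and only if $H\alpha\cap K\alpha=0$. For (3) I would argue directly from the definition of type: for $r\in\bbQ$, $\bbQ$--linearity gives $r(a\alpha)=(ra)\alpha$, and since $\alpha$ maps $G$ bijectively onto $G$ we have $ra\in G$ if and only if $(ra)\alpha\in G$; hence $r\in\type(a)$ exactly when $r\in\type(a\alpha)$, which is the claimed equality of types.

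Part (4) is where the only real care is needed, and I expect the clause ``whenever either side is defined'' to be the main point rather than the displayed identity itself. The identity $(r\sum_{b\in B}n_bb)\alpha=r\sum_{b\in B}n_b(b\alpha)$ holds unconditionally as an equation in $[G]$, simply because $\alpha$ is $\bbQ$--linear on $[G]$ and $B\subseteq[G]$. The substantive content is that $r\sum_{b\in B}n_bb$ lies in $G$ if and only if $r\sum_{b\in B}n_b(b\alpha)$ does, so that both sides are simultaneously genuine elements of $G$; this is exactly the surjectivity half of $G\alpha=G$, that is, the fact that $\alpha^{-1}$ is again an automorphism of $G$, applied to the two equal vectors above. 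Thus the only non-formal ingredient throughout is that $\alpha$ restricts to a bijection of $G$ onto itself, guaranteed by $\alpha\in\Aut(G)$; once that is in hand, (1)--(4) are all one-line consequences of linearity, injectivity, and $G\alpha=G$.
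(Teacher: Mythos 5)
Your proof is correct. The paper states this lemma explicitly \emph{without} proof, as a list of well-known facts; your argument --- reducing all four parts to the identification of $\alpha$ with its $\bbQ$--linear extension to $[G]$ (fixed in Section 2) together with injectivity and $G\alpha=G$ --- is exactly the standard justification that identification is meant to supply, including the right reading of ``defined'' in (4) as membership in $G$, so it fills the omitted proof in the intended way.
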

 
\begin{proposition}\label{4.2} $\Aut(G)$ acts on $\bases(G)$.    This action preserves splitting partitions, indecomposable bases,  and complete splitting partitions.
 \end{proposition}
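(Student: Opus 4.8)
The plan is to derive all four claims from a single observation: each $\alpha\in\Aut(G)$, viewed as a linear automorphism of $[G]$, restricts to a bijection of $G$ onto itself and obeys the transfer rule $S_*\alpha=(S\alpha)_*$ of Lemma \ref{properties}(1). Everything else is bookkeeping on top of this.

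First I would check that $B\mapsto B\alpha$ is a well-defined action on $\bases(G)$. By Proposition \ref{bassisprops}(1), $B\in\bases(G)$ precisely when $B\subseteq G$ and $B$ is a vector-space basis of $[G]$. Since $\alpha$ is a linear automorphism of $[G]$ it sends a basis of $[G]$ to a basis of $[G]$, and since $G\alpha=G$ we have $B\alpha\subseteq G$; hence $B\alpha\in\bases(G)$. That the identity acts trivially and that $(B\alpha)\beta=B(\alpha\beta)$ are immediate from composition of maps, so $\Aut(G)$ genuinely acts on $\bases(G)$.

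Next I would treat splitting partitions, which is the heart of the matter. Let $B=C\dot\cup D$ be a splitting partition, so $G=C_*\oplus D_*$. Injectivity of $\alpha$ makes $B\alpha=C\alpha\dot\cup D\alpha$ a partition of $B\alpha$. Applying $\alpha$ to the decomposition and invoking Lemma \ref{properties}(1) gives $G=G\alpha=C_*\alpha+D_*\alpha=(C\alpha)_*+(D\alpha)_*$, while Lemma \ref{properties}(2) yields $(C\alpha)_*\cap(D\alpha)_*=0$; hence $G=(C\alpha)_*\oplus(D\alpha)_*$ and $C\alpha\dot\cup D\alpha$ is a splitting partition. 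Put differently, $\alpha$ induces a bijection between the partitions of $B$ and those of $B\alpha$, with inverse induced by $\alpha^{-1}$, carrying splitting partitions to splitting partitions. (One could equally argue through the purity criterion of Proposition \ref{dirdec}(1), since automorphisms preserve purity, but the direct computation is cleaner here.)

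The last two assertions then follow formally. For indecomposable bases: were $B$ indecomposable yet $B\alpha$ admitted a splitting partition, applying $\alpha^{-1}$ together with the previous paragraph would manufacture a splitting partition of $B$, a contradiction; thus $B\alpha$ is indecomposable. For complete splitting partitions $\calB=\dot\cup_{i\in[t]}B_i$, iterating the splitting case shows $G=\bigoplus_{i\in[t]}(B_i\alpha)_*$, and each summand $(B_i\alpha)_*=(B_i)_*\alpha$ is carried isomorphically by $\alpha$ onto the indecomposable group $(B_i)_*$, hence is itself indecomposable; therefore $\calB\alpha$ is a complete splitting partition. I expect no real obstacle: the whole argument is powered by Lemma \ref{properties}, and the only point demanding genuine care is confirming that $\alpha$ transports an \emph{internal} direct sum to an internal direct sum, which is exactly where parts (1) and (2) of that lemma must be combined.
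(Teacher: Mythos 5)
Your proof is correct and follows essentially the same route as the paper's: both establish the action and then push splitting partitions, indecomposability, and complete splitting partitions through $\alpha$ using the transfer rule $S_*\alpha=(S\alpha)_*$ together with preservation of trivial intersections. The only cosmetic difference is that you justify $B\alpha\in\bases(G)$ via the linear-algebra characterisation of bases, where the paper cites Lemma \ref{properties}(4); your write-up simply spells out details (the role of Lemma \ref{properties}(2), the $\alpha^{-1}$ contraposition, and why indecomposability of the summands $(B_i\alpha)_*$ persists) that the paper leaves implicit.
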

 
 \begin{proof} Let $B\in\bases(G)$, and $\alpha\in\Aut(G)$. Then Lemma \ref{properties} (4)  implies that $B\alpha\in\bases(G)$.  Clearly, $B1_G=B$   and for all $\alpha,\ \beta\in\Aut(G),\ (B\alpha)\beta= B(\alpha\beta)$.
 
 If $B=C\dot\cup D$ and  $G=C_*\oplus D_*$, then $B\alpha=C\alpha\dot\cup D\alpha$ and $G=C_*\alpha\oplus D_* \alpha$. Conversely, if $B$ has no splitting partition, then $B\alpha$ has no splitting partition.
 
 Let $\calB=\dot\cup_{i\in[t]}B_i$ be a complete splitting partition of $B$, so $G(\calB)$ is  a complete decomposition. Then $G= \bigoplus_{i\in[t]} ( B_i\alpha)_*$ is also a complete   decomposition, so $\calB\alpha=\dot\cup_{i\in[t]} B_i\alpha$ is a complete splitting partition of $B\alpha$.
 \end{proof}

\begin{corollary}\label{4.3}  Let $G\in\calV$. The following are equivalent:
\begin{enumerate}\item $ \bigoplus_{i\in[t]}A_i$ is a complete decomposition of 
 $G$;
\item  $G$ has a basis with complete  splitting  partition $B=\dot\bigcup_{i\in [t]} B_i$ with $B_i\in\Bases(A_i)$;
\item For all    $\alpha\in\Aut(G)\ \dot\bigcup_{i\in[t]} B_i\alpha$ is a complete splitting partition of a basis of $G$.
\qed\end{enumerate}
\end{corollary}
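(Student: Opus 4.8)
The plan is to prove the cycle $(1)\Rightarrow(2)\Rightarrow(3)\Rightarrow(1)$, since almost all the substantive work has already been carried out in Propositions \ref{extension} and \ref{4.2}; the corollary is essentially a repackaging of those two results together with the observation that $1_G$ is an automorphism of $G$.

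First I would treat $(1)\Rightarrow(2)$. Given a complete decomposition $G=\bigoplus_{i\in[t]}A_i$, choose any $B_i\in\Bases(A_i)$ for each $i$. The implication $(1)\Rightarrow(2)$ of Proposition \ref{extension} then yields immediately that $B=\dot\bigcup_{i\in[t]}B_i$ is a complete splitting partition of a basis of $G$ with $A_i=(B_i)_*$, which is exactly statement $(2)$.

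For $(2)\Rightarrow(3)$ I would invoke Proposition \ref{4.2}, which asserts that $\Aut(G)$ acts on $\Bases(G)$ and that this action carries complete splitting partitions to complete splitting partitions. Hence, starting from the complete splitting partition $\dot\bigcup_{i\in[t]}B_i$ of the basis $B$, for every $\alpha\in\Aut(G)$ the set $B\alpha$ is again a basis of $G$ and $\dot\bigcup_{i\in[t]}B_i\alpha$ is a complete splitting partition of it, which is precisely $(3)$.

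Finally, $(3)\Rightarrow(1)$ is obtained by specialising to $\alpha=1_G$: then $\dot\bigcup_{i\in[t]}B_i$ itself is a complete splitting partition of a basis of $G$, so Proposition \ref{extension} gives that $\bigoplus_{i\in[t]}(B_i)_*$ is a complete decomposition. Since each $B_i\in\Bases(A_i)$, Proposition \ref{Prop1}(3) identifies $(B_i)_*=A_i$, and $(1)$ follows. I do not anticipate any real obstacle; the only points needing a word of care are the identification $(B_i)_*=A_i$, which is exactly Proposition \ref{Prop1}(3), and the recognition that $(3)$ is logically weaker than it appears, since it suffices to test it on the single automorphism $1_G$.
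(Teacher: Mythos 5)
Your overall route---Proposition \ref{extension} to pass between complete decompositions and complete splitting partitions, Proposition \ref{4.2} for equivariance under $\Aut(G)$, and the specialisation $\alpha=1_G$ to close the cycle---is exactly the intended one: the paper supplies no argument for this corollary because it is meant to follow immediately from those two propositions. Your steps $(1)\Rightarrow(2)$ and $(2)\Rightarrow(3)$ are fine.

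The step that fails as written is the last identification in $(3)\Rightarrow(1)$: you claim that $B_i\in\Bases(A_i)$ together with Proposition \ref{Prop1}(3) gives $(B_i)_*=A_i$. Proposition \ref{Prop1}(3) is a statement about purification \emph{inside $A_i$}: it says $[B_i]\cap A_i=A_i$. But the star operator appearing in ``complete splitting partition'' is purification \emph{inside $G$}, i.e.\ $(B_i)_*=[B_i]\cap G$, and this equals $A_i$ only when $A_i$ is pure in $G$. Purity of the $A_i$ is available in $(1)\Rightarrow(2)$, because direct summands are pure, but it is precisely what is at stake when you try to return from $(3)$ to $(1)$. Concretely, take $G=\bbZ^2$, $A_1=2\bbZ\oplus 0$, $A_2=0\oplus\bbZ$, $B_1=\{(2,0)\}$, $B_2=\{(0,1)\}$: then $B_i\in\Bases(A_i)$, and since $S_*\alpha=(S\alpha)_*$ for every $\alpha\in\Aut(G)$ (Lemma \ref{properties}(1)), the partition $B_1\alpha\dot\cup B_2\alpha$ is a complete splitting partition of a basis of $G$ for every $\alpha$; yet $A_1\oplus A_2=2\bbZ\oplus\bbZ\neq G$, so $(1)$ fails. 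The repair is to read statement $(2)$ in the strong form in which Proposition \ref{extension}(2) is actually stated, namely including the condition $A_i=B_{i*}$ (star taken in $G$); that condition then travels with the data into $(3)$, and your specialisation $\alpha=1_G$ yields $G=\bigoplus_{i\in[t]}(B_i)_*=\bigoplus_{i\in[t]}A_i$ with indecomposable summands, with no appeal to Proposition \ref{Prop1}(3) at all. As it stands, your citation of Proposition \ref{Prop1}(3) papers over the only non-trivial point in the cycle.
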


The action of $\Aut(G)$ on $\Bases(G)$, unlike that of $\Aut([G])$ on $\Bases([G])$,   may be far from transitive; in fact its orbits determine the  isomorphism classes of direct decompositions of $G$.  

We say that complete decompositions $\calD\colon \bigoplus_{i\in[t]}A_i$ and $\calE\colon \bigoplus_{j\in[s]}C_j$ of $G$ are \textit{isomorphic}, denoted $\calD\cong\calE$, if their indecomposable summands are pairwise isomorphic, i.e.   $t=s$ and there is a permutation $\pi$ of $[t]$ such that for all $i\in[t],\ C_{i\pi}\cong B_i$.

\begin{proposition}\label{hilfsatz} Let $\calB=\dot\bigcup_{i\in[t]} B_i,\ \calC=\dot\bigcup_{j\in[s]} C_j$ be complete partitions of $B\in\Bases(G)$.  There  exists $\alpha\in\Aut(G)$ such that $\calB\alpha=\calC$ if and only if $G(\calB)\cong G(\calC)$.
\end{proposition}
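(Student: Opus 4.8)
The plan is to prove the two implications of the biconditional separately, translating between splitting partitions of $B$ and direct decompositions of $G$ by means of Lemma \ref{properties} and Proposition \ref{dirdec}.

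For the implication $\calB\alpha=\calC\Rightarrow G(\calB)\cong G(\calC)$, suppose $\alpha\in\Aut(G)$ satisfies $\calB\alpha=\calC$. Since $\calB\alpha=\dot\bigcup_i B_i\alpha$ is literally the partition $\calC=\dot\bigcup_j C_j$, the blocks must coincide: there is a bijection $\sigma$ with $B_i\alpha=C_{\sigma(i)}$ for all $i$, so in particular $t=s$. By Lemma \ref{properties}(1), $(B_i)_*\alpha=(B_i\alpha)_*=(C_{\sigma(i)})_*$, and since $\alpha\in\Aut(G)$ its restriction to the pure subgroup $(B_i)_*$ is an isomorphism onto $(C_{\sigma(i)})_*$. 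Thus the summands of $G(\calB)$ and $G(\calC)$ are pairwise isomorphic, which is exactly $G(\calB)\cong G(\calC)$.

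For the converse, assume $G(\calB)\cong G(\calC)$, so $t=s$ and there are a permutation $\sigma$ and isomorphisms $\theta_i\colon (B_i)_*\to(C_{\sigma(i)})_*$. I would first record a rigidity forced by the shared basis: setting $D_{ij}=B_i\cap C_j$, decomposing each element of $(B_i)_*$ along $\calC$ and using that the blocks are disjoint subsets of $B$ shows that $G=\bigoplus_{i,j}(D_{ij})_*$ is a common refinement of both partitions, with $(B_i)_*=\bigoplus_j (D_{ij})_*$ and $(C_j)_*=\bigoplus_i (D_{ij})_*$. Assembling the $\theta_i$ over the decomposition $G=\bigoplus_i(B_i)_*$ produces $\theta=\bigoplus_i\theta_i\in\Aut(G)$ with $(B_i)_*\theta=(C_{\sigma(i)})_*$; by Proposition \ref{4.2} and Lemma \ref{properties}(1), $\calB\theta$ is then a splitting partition of the basis $B\theta$ inducing the very decomposition $\bigoplus_j(C_j)_*$ that $\calC$ induces.

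The main obstacle is the last step. The automorphism $\theta$ need not satisfy $\calB\theta=\calC$, because $B_i\theta$ is only \emph{some} basis of $(C_{\sigma(i)})_*$, not the prescribed block $C_{\sigma(i)}$; equivalently, to obtain $\calB\alpha=\calC$ on the nose one needs $\alpha$ to stabilise the set $B$ and permute its $\calB$-blocks onto its $\calC$-blocks. I would therefore correct $\theta$ by an automorphism that fixes the decomposition $\bigoplus_j(C_j)_*$ and, within each summand $(C_j)_*$, carries the basis $B_{\sigma^{-1}(j)}\theta$ onto the basis $C_j$. Producing these summand automorphisms — matching, inside each $(C_j)_*$, the two bases that descend from the two partitions — is precisely where the alignment of both partitions with the single basis $B$ (encoded in the refinement $\{D_{ij}\}$, together with the type invariance of Lemma \ref{properties}(3)) must be exploited, and it is the crux of the argument; by contrast the forward implication and the assembly of $\theta$ are routine.
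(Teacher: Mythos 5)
Your forward implication is correct, and your converse, up to the point where you assemble $\theta=\bigoplus_i\theta_i\in\Aut(G)$ with $(B_i)_*\theta=(C_{\sigma(i)})_*$, is word-for-word the paper's own proof --- and that is where the paper \emph{stops}. In this paper the equality $\calB\alpha=\calC$ is operationally an equality of the induced decompositions, $G(\calB)\alpha=G(\calC)$, which is exactly the form in which Proposition \ref{hilfsatz} is applied to prove Theorem \ref{main1}; under that reading the assembled $\theta$ already finishes the argument. So the step you single out as ``the crux'' is one the paper never takes, and since you explicitly leave it open, your proposal as written is an incomplete proof of the statement you set out to prove.

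More importantly, that open step cannot be filled, so chasing the strict set-theoretic reading of $\calB\alpha=\calC$ is the wrong move. Two bases of a summand need not be conjugate under its automorphism group: in $\bbZ$ no automorphism carries $\{2\}$ onto $\{1\}$. Concretely, for $G=\bbZ^2$ take $\calB=\{e_1\}\,\dot\cup\,\{e_2\}$ and $\calC=\{2e_1\}\,\dot\cup\,\{e_2\}$: then $G(\calB)=G(\calC)=\bbZ e_1\oplus\bbZ e_2$, yet no $\alpha\in\Aut(G)$ maps blocks onto blocks, since either assignment of blocks has determinant $\pm 2$ and so is not invertible over $\bbZ$. (Here the two partitions lie on different bases, which is the generality Theorem \ref{main1} actually requires.) Under the literal hypothesis of the statement --- $\calB$ and $\calC$ partition the \emph{same} basis $B$ --- your own refinement observation closes the argument if you push it one step further: since each $(B_i)_*$ is indecomposable and $(B_i)_*=\bigoplus_j(D_{ij})_*$ with $D_{ij}=B_i\cap C_j$, each $B_i$ can meet only one $C_j$, so $B_i\subseteq C_j$; symmetrically every $C_j$ lies in some $B_i$, whence $\calB=\calC$ and $\alpha=1$ works trivially. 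Either way, the correction-by-summand-automorphisms you propose is not the missing step: it is unnecessary when the bases coincide and impossible when they differ.
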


\begin{proof}  
Assume there exists such $\alpha\in\Aut(G)$. Proposition \ref{hilfsatz} implies that for each $i\in[t]$, there is a $j\in[s]$ such that  $(C_j)_*=(B_i\alpha)_*$ and this correspondence is 1--1. Hence $G(B)\cong G(C)$.

Conversely, if for all $i\in[t],\ \alpha_i\colon ( B_i)_* \to ( C_{i\pi})_*$ are isomorphisms, then $\alpha=(\alpha_i\colon i\in[t])\in\Aut(G)$.
  \end{proof}

\begin{theorem}\label{main1} 
The orbits of $\Aut(G)$ acting on the complete decompositions of $G$ are the  isomorphic complete decompositions.
\end{theorem}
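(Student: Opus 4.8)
The plan is to establish separately the two inclusions between orbits and isomorphism classes. I take $\alpha\in\Aut(G)$ to act on a complete decomposition $\calD\colon G=\bigoplus_{i\in[t]}A_i$ by $\calD\alpha\colon G=\bigoplus_{i\in[t]}A_i\alpha$; this is again a complete decomposition, each $A_i\alpha$ being an indecomposable summand as the automorphic image of the indecomposable summand $A_i$ (cf. Proposition \ref{4.2}). The theorem then amounts to the claim that, for complete decompositions $\calD$ and $\calE$ of $G$, there exists $\alpha\in\Aut(G)$ with $\calD\alpha=\calE$ if and only if $\calD\cong\calE$.

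The forward inclusion, that lying in a common orbit forces isomorphism, is immediate. If $\calE=\calD\alpha$ then the summands of $\calE$ are exactly the $A_i\alpha$, and for each $i$ the map $\alpha|_{A_i}\colon A_i\to A_i\alpha$ is an isomorphism onto its image. Hence the indecomposable summands of $\calD$ and $\calE$ correspond pairwise up to isomorphism, which is the definition of $\calD\cong\calE$.

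The substantive direction is the converse. Given $\calD\colon G=\bigoplus_{i\in[t]}A_i$ and $\calE\colon G=\bigoplus_{j\in[t]}C_j$ with $\calD\cong\calE$, I would fix a permutation $\pi$ of $[t]$ and isomorphisms $\beta_i\colon A_i\to C_{i\pi}$ realising the isomorphism. The key step is to glue these into a single map: writing each $g\in G$ uniquely as $g=\sum_i a_i$ with $a_i\in A_i$, set $g\alpha=\sum_i a_i\beta_i$. This is precisely the automorphism manufactured in the converse half of Proposition \ref{hilfsatz}, and the identical reasoning shows $\alpha\in\Aut(G)$. Since $\alpha$ agrees with $\beta_i$ on $A_i$ and $\beta_i$ carries $A_i$ onto $C_{i\pi}$, one gets $A_i\alpha=C_{i\pi}$; thus the summands of $\calD\alpha$ are exactly the $C_{i\pi}$, so $\calD\alpha=\calE$ and the two decompositions share an orbit.

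The one point that genuinely needs care is that the glued map $\alpha$ is an automorphism of $G$ itself, together with the image identity $A_i\alpha=C_{i\pi}$: injectivity comes from independence of the summands $C_{i\pi}$ in $\calE$, surjectivity from solving $h=\sum_i c_{i\pi}$ componentwise as $g=\sum_i c_{i\pi}\beta_i^{-1}$, and the image identity from $\alpha|_{A_i}=\beta_i$. I would emphasise that, in contrast with Proposition \ref{hilfsatz}, the decompositions $\calD$ and $\calE$ are not assumed to arise from a common basis, and that one cannot in general reduce to that situation: selecting bases $B_i\in\Bases(A_i)$ and $D_j\in\Bases(C_j)$ produces two bases of $G$ that are typically interchanged only by an automorphism of the ambient space $[G]$, whereas $\Aut(G)$ need not act transitively on $\Bases(G)$. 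The explicit gluing of the $\beta_i$ is exactly what dispenses with the common--basis hypothesis, and I expect it to be the crux of the argument.
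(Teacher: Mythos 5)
Your proof is correct, and its engine --- gluing the componentwise isomorphisms $\beta_i$ into a single $\alpha\in\Aut(G)$ --- is the same idea the paper uses, since its one-line proof simply invokes the converse half of Proposition \ref{hilfsatz}. The genuine difference is one of rigour rather than strategy: Proposition \ref{hilfsatz} is stated only for two complete splitting partitions of a \emph{common} basis $B\in\Bases(G)$, and, as you correctly insist, two complete decompositions of $G$ need not be realisable as splitting partitions of one basis. Already in $G=\bbZ^2$ the complete decompositions $\bbZ(1,0)\oplus\bbZ(0,1)$ and $\bbZ(1,0)\oplus\bbZ(1,1)$ admit no common splitting basis: in rank $2$ a splitting partition consists of two singletons whose pure hulls are the summands, so a basis splitting for both would force the set equality $\{\bbZ(1,0),\bbZ(0,1)\}=\{\bbZ(1,0),\bbZ(1,1)\}$, which fails. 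So the paper's appeal to Proposition \ref{hilfsatz} is, strictly speaking, outside that proposition's hypotheses, whereas your reworking of the gluing argument directly at the level of decompositions --- well-definedness from directness of $\calD$, bijectivity from directness of $\calE$, and the image identity $A_i\alpha=C_{i\pi}$ --- closes exactly that gap. What the paper's route buys is brevity; what yours buys is a self-contained, basis-free argument, and it is the version that should actually be recorded.
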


\begin{proof} Let $\calD,\ \calE $ be  complete decompositions of $G$. Then by Proposition \ref{hilfsatz} $\calD\cong\calE$ if and only if there exists $\alpha\in\Aut(G)$ such that $\calD\alpha=\calE$.

Thus every orbit consists of isomorphism classes of complete decompositions, and every isomorphic pair of complete decompositions are in the same orbit.   
\end{proof}

To clarify Theorem \ref{main1}, note that in general, $G$ may have several non--isomorphic complete decompositions, each of which determines several complete splitting partitions of bases of $G$. For each isomorphic   pair $\calB,\,\calC$ of complete splitting partitions of bases,  there may be several $\alpha\in\Aut([G])$ such that $\calB\alpha=\calC$. 

\section{The Quasi  Category of $\calV$}\label{5}

Let $G\in\calV$. There is  a class of subgroups of $V$   which shed  light on the structure of $G$.

\begin{definition} Let  $H,\ G\in\calV$. $H$ is \begin{itemize}
\item   \textit{quasi--equal to   $G$}, written $H\dot= G$,  if there exists $r\in\bbQ^*$ such that $rH= G$.\footnote{This definition, as well as the notation, differs from that in  \cite[\S92]{F2}. However, it is more suited to our context and the two definitions are equivalent.}

 \item \textit{quasi--isomorphic to $G$}, written $H\approx G$,  if $H$ and $ G$ are isomorphic to quasi--equal subgroups of $V$.
\end{itemize} \end{definition}

Quasi--equal groups may have very different structures. Fuchs \cite[Example 2, \S88]{F2} presents examples of groups $G\dot=H$ of arbitrary finite rank $n\geq 2$ such that $G$ is completely
decomposable and $H$ is indecomposable.

The properties of these relations are summarised in the following proposition, whose proof is routine.

\begin{proposition}\label{qprop}\begin{enumerate}

\item Quasi--equality and quasi--isomorphism are equivalences  on $\calV$ which extend equality and isomorphism respectively;
\item   $(a/b)H= G$ if and only if  $aH=bG\leq H\cap G\dot=G$;

\item   $H\dot= G$ implies that $[H]=[G]$ and $H\approx G$ implies $[H]\cong [G]$.
\item If $A$ and $B$ are pure subgroups of $G$ with $A\dot=B$, then $A=B$.
\qed\end{enumerate}
\end{proposition}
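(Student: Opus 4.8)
The plan is to verify the four clauses in turn, each of which reduces to routine bookkeeping with the scalar action of $\bbQ^*$ on $V$; I will flag the two places where slightly more than bookkeeping is needed. For (1), reflexivity, symmetry and transitivity of $\dot=$ come from the scalars $1$, $r^{-1}$ and $sr$: if $rH=G$ then $r^{-1}G=H$, and if in addition $sG=K$ then $(sr)H=K$; equality is the case $r=1$, so $\dot=$ extends $=$. For $\approx$, reflexivity, symmetry, and the fact that it extends $\cong$ are immediate — for the last, if $H\cong G$ one takes $G$ itself as a common representative, using $G\dot= G$. The one clause needing an argument is transitivity: given $H\cong H'\dot= G'\cong G$ and $G\cong G''\dot= K''\cong K$, I would compose the isomorphisms $G'\cong G\cong G''$ to a single $\bbQ$-linear map $\varphi$ of $V$ (every isomorphism between groups in $\calV$ extends uniquely to the spanned subspaces and commutes with the $\bbQ^*$-action, just as in Lemma \ref{properties}(4)). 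Then $\varphi(H')\cong H$, and applying $\varphi$ to $rH'=G'$ yields $r\varphi(H')=G''$, so $\varphi(H')\dot= G''\dot= K''$ by transitivity of $\dot=$; hence $H\approx K$.

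For (2), since multiplication by the nonzero integer $b$ is a bijection of $V$, one has $(a/b)H=G$ if and only if $aH=bG$. Because $a\in\bbZ^*$ and $b\in\bbN$, we have $aH\leq H$ and $bG\leq G$, so $aH=bG\leq H\cap G$. It remains to establish $H\cap G\dot= G$. Writing $H=(b/a)G$, I would multiply $H\cap G$ by $a$, using that multiplication by $a$ is injective (so it distributes over intersections) together with $\gcd(a,b)=1$, to get $a(H\cap G)=aH\cap aG=bG\cap aG=abG$; dividing back by $a$ gives $H\cap G=bG$, and then $(1/b)(H\cap G)=G$, i.e. $H\cap G\dot= G$.

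For (3), if $rH=G$ then $[G]=[rH]=r[H]=[H]$, since scaling by $r\neq 0$ fixes every subspace of $V$. If $H\approx G$, write $H\cong H'\dot= G'\cong G$: each isomorphism preserves rank and $\dot=$ preserves the spanned subspace, so $\rank H=\rank G$, whence $[H]$ and $[G]$ are $\bbQ$-spaces of equal finite dimension and $[H]\cong[G]$. For (4), purity of $A$ and $B$ in $G$ means $A=[A]\cap G=A_*$ and likewise for $B$; by (3), $A\dot= B$ gives $[A]=[B]$, so $A=[A]\cap G=[B]\cap G=B$.

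The main obstacle — indeed the only genuinely non-formal step — is the coprime intersection identity $aG\cap bG=abG$ used to pin down $H\cap G=bG$ in (2). One must use the special form $H=(b/a)G$ here, since the mere sandwich $bG\leq H\cap G\leq G$ does not by itself force quasi-equality: finite index is strictly weaker than $\dot=$. The transport of a quasi-equality along an isomorphism in the transitivity argument for $\approx$ in (1) is the only other point that is not pure manipulation of scalars.
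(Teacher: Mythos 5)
Your proof is correct. The paper offers no proof of this proposition at all (it is stated with the remark that the proof is routine), and your argument is exactly the routine verification intended, including correct handling of the only two non-formal points: the coprime identity $aG\cap bG=abG$ (valid since $\gcd(a,b)=1$ gives $ua+vb=1$, whence $x=ag=bg'$ implies $x=ab(ug'+vg)$) used to pin down $H\cap G=bG$ in (2), and the transport of a quasi-equality along the unique $\bbQ$-linear extension of an isomorphism, needed for transitivity of $\approx$ in (1).
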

\begin{notation}\label{quasinote}
Let $G\in\calV$.
\begin{itemize}
\item A  \textit{quasi--decomposition} $G$ is a  quasi--equality  $G\dot=\bigoplus_i A_i$; the groups $A_i$ are called \textit{quasi--summands} of $G$;

\item $G$  is \textit{strongly indecomposable} if it has no proper quasi--decompositions;

\item A  \textit{strong decomposition} is a direct sum of strongly indecomposable groups, and a \textit{strong quasi--decomposition of $G$}
is a  strong decomposition quasi--equal to $G$.
\end{itemize}

Completely decomposable groups are the type examples of strong decompositions and   almost completely decomposable  (acd) groups of  strong quasi--decomposition. In these cases, the direct summands of a strong (quasi--)decomposition are of rank 1.

\end{notation}

 \begin{definition}

 The \textit{quasi--automorphism group} of $G$   \[ \bbQ^*\Aut(G) :=\{r\alpha\colon   r\in \bbQ^*,\ \alpha\in\Aut(G)\}.\] 
\end{definition}

The properties of  $\bbQ^*\Aut(G)$ are summarised in the following proposition, whose proof follows immediately from the definitions:

\begin{proposition}\label{5.7} For all $G\in\calV$,
\begin{enumerate}
\item $\Aut(G)\leq\bbQ^*\Aut(G)\leq\Aut([G])$;
\item $\Aut(G)$ is a normal subgroup of $\bbQ^*\Aut(G)$;

\item  $H\dot=G$ if and only if there exists   $r\alpha\in\bbQ^*\Aut([G])$ such that $rG\alpha =H$;
 \item $\bbQ^*\Aut(G)$ acts   on the following sets:

  strongly indecomposable subgroups of $G$;
quasi--decompositions of $G$;
 strong  quasi--decompositions of $G$.
\qed \end{enumerate}
 \end{proposition}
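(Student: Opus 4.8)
The plan is to prove Proposition~\ref{5.7} part by part, since each claim follows fairly directly from the definitions of $\Aut(G)$, $\bbQ^*\Aut(G)$, and the quasi-relations established earlier. The overall strategy is to exploit the identification (set up in Section~2) of automorphisms of $G$ with their unique extensions to $\Aut([G])$, so that every element of $\bbQ^*\Aut(G)$ is literally a $\bbQ$-linear automorphism of the vector space $[G]$ of the special form $r\alpha$.

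First I would handle (1). The inclusion $\Aut(G)\leq\bbQ^*\Aut(G)$ is immediate by taking $r=1$. For $\bbQ^*\Aut(G)\leq\Aut([G])$, I observe that each $\alpha\in\Aut(G)$ extends uniquely to an element of $\Aut([G])$ and that scalar multiplication by $r\in\bbQ^*$ is an automorphism of $[G]$; their composite $r\alpha$ is therefore in $\Aut([G])$, and the set is closed under composition and inverses because $(r\alpha)(s\beta)=(rs)(\alpha\beta)$ and $(r\alpha)^{-1}=r^{-1}\alpha^{-1}$, using that $\bbQ^*$ is central in $\Aut([G])$ (scalars commute with linear maps). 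For (2), normality follows from the same centrality: for $r\alpha\in\bbQ^*\Aut(G)$ and $\beta\in\Aut(G)$, the conjugate $(r\alpha)\beta(r\alpha)^{-1}=\alpha\beta\alpha^{-1}$, since the scalar $r$ cancels, and this lies in $\Aut(G)$ because $\Aut(G)$ is a group.

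Next, for (3), I would unwind the definition of quasi-equality together with Proposition~\ref{qprop}. If $H\dot=G$ then by definition there is $r\in\bbQ^*$ with $rH=G$; I would produce the required element of $\bbQ^*\Aut([G])$ by writing $H=r^{-1}G$ and exhibiting a scalar-times-automorphism carrying $rG$ to $H$, invoking Proposition~\ref{qprop}(3) that $H\dot=G$ forces $[H]=[G]$ so that everything takes place inside a single vector space. Conversely, if $rG\alpha=H$ with $\alpha\in\Aut([G])$, then since $\alpha$ preserves $[G]$ and $r$ is a nonzero scalar, $H$ is a scalar multiple of $G$ up to the action of $\alpha$, giving $H\dot=G$ directly. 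Finally, for (4), each listed set is acted on because $\bbQ^*\Aut(G)\leq\Aut([G])$ sends subgroups of $[G]$ to subgroups, and the defining properties are preserved: strong indecomposability and the shape of a (strong) quasi-decomposition are invariant under the scalar-and-automorphism action, as multiplying a decomposition $G\dot=\bigoplus A_i$ by $r\alpha$ yields $\bigoplus (rA_i\alpha)$, again a direct sum of the same number of summands of the same isomorphism types.

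The main obstacle I anticipate is entirely bookkeeping rather than conceptual: making sure in part~(3) that the scalar and the automorphism are correctly composed so that the element genuinely lies in $\bbQ^*\Aut([G])$ and not merely in $\bbQ^*\Aut(G)$, and tracking which of $G$, $H$, $[G]=[H]$ each map acts on. Because the authors flag the proof as following ``immediately from the definitions,'' I expect no deep step; the care required is only in verifying that the scalar $r$ commutes past $\alpha$ at each point, which is exactly the centrality of $\bbQ^*$ in $\Aut([G])$ that underlies parts (1) and (2) as well.
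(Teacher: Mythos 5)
The paper itself supplies no argument here (the proposition is stated with a q.e.d.\ and the remark that it ``follows immediately from the definitions''), so the only question is whether your proposal is actually correct. Parts (1) and (2) are fine: centrality of the scalars $\bbQ^*$ in $\Aut([G])$ gives closure, inverses, and the cancellation $(r\alpha)\beta(r\alpha)^{-1}=\alpha\beta\alpha^{-1}$. But your converse direction in (3) contains a genuine gap. You write that if $rG\alpha=H$ with $\alpha\in\Aut([G])$, then ``$H$ is a scalar multiple of $G$ up to the action of $\alpha$, giving $H\dot=G$ directly.'' This is not a proof, and under the paper's definition of quasi-equality ($H\dot=G$ iff $sH=G$ for a \emph{single} scalar $s\in\bbQ^*$) the implication is false. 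Concretely, take $G=\bbZ e_1\oplus\bbZ e_2\in\calV$, let $\alpha\in\Aut([G])$ be defined by $e_1\alpha=e_1,\ e_2\alpha=2e_2$, and take $r=1$. Then $rG\alpha=\bbZ e_1\oplus 2\bbZ e_2=H$, but $sH=G$ would force $s\bbZ=\bbZ$ and $2s\bbZ=\bbZ$ simultaneously, which is impossible. The point your phrase ``up to the action of $\alpha$'' hides is that for an arbitrary vector-space automorphism $\alpha\in\Aut([G])$, the group $G\alpha$ need not be quasi-equal to $G$; only when $\alpha$ is (the extension of) an element of $\Aut(G)$ do we have $G\alpha=G$. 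So statement (3) as printed is only salvageable either by reading $\bbQ^*\Aut(G)$ in place of $\bbQ^*\Aut([G])$ (in which case $rG\alpha=rG$ and the equivalence is immediate), or by weakening $\dot=$ to $\approx$: your data give $H\dot=G\alpha\cong G$, hence quasi-isomorphism, not quasi-equality. Note also that the ``obstacle'' you flag at the end is backwards: landing in the larger group $\bbQ^*\Aut([G])$ is the trivial direction; the danger is exactly that its elements are too general for the converse.

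A smaller but real problem occurs in your treatment of (4). For the first listed set, if $H\leq G$ is strongly indecomposable and $r=a/b$ with $b>1$, then $rH\alpha$ is a strongly indecomposable subgroup of $[G]$ but need not be contained in $G$ at all, so the set of strongly indecomposable subgroups of $G$ is not literally stable under $H\mapsto rH\alpha$; one must either pass to strongly indecomposable subgroups of $[G]$, or to quasi-equality classes of such subgroups (using $rH\alpha\dot=H\alpha\leq G$). Your argument is sound for the other two sets, since the summands of a quasi-decomposition $G\dot=\bigoplus_i A_i$ are only required to lie in $V$ with $\bigoplus_i A_i$ quasi-equal to $G$, and indeed $rG\alpha=rG\dot=G$ for $\alpha\in\Aut(G)$. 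In short: tighten (3) by first settling which group ($\bbQ^*\Aut(G)$ or $\bbQ^*\Aut([G])$) and which relation ($\dot=$ or $\approx$) the claim is really about, and repair (4) by specifying the set on which the action genuinely lives.
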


The notions of quasi--equality and quasi--isomorphism are due to   \cite{J1}  and  \cite {J2}  and were put into a categorical context by \cite{Walker}. The    properties of this category are outlined in \cite[\S7]{Arnold}.

The most important such property  is the existence and uniqueness of  strong  quasi--decompositions.

\begin{theorem}
\label{Jonsson}  $[$\textbf{J\'onsson's Theorem}$]$, \cite[Theorem 92.5]{F2}  Let $G\in\calV$. Then  $G$ has   strong quasi--decompositions.

If $ \bigoplus_{i\in[t]}A_i$  and $ \bigoplus_{j\in[m]}C_j$ are strong quasi--decompositions of $G$, then $t=m$ and there is a permutation $\pi$ of $[t]$ such that for all $i,\ A_i\approx C_{i\pi}$. 
\end{theorem}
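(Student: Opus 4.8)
The plan is to reduce both claims to statements about the \emph{quasi--endomorphism ring} $E=\bbQ\End(G)$, the $\bbQ$--subalgebra of $\End([G])$ generated by $\End(G)$. Since $\End([G])\cong M_k(\bbQ)$ with $k=\rank(G)$, $E$ is a finite--dimensional $\bbQ$--algebra, hence Artinian. First I would record the dictionary between quasi--decompositions and idempotents: an idempotent $e=e^2\in E$ yields the quasi--decomposition $G\dot=(Ge)_*\oplus(G(1-e))_*$ (one clears a common denominator $N$ with $NGe,\,NG(1-e)\subseteq G$ to get the quasi--equality from $e+(1-e)=1$), and conversely every quasi--decomposition arises this way, orthogonal idempotents corresponding to the quasi--summands and $e$ being primitive precisely when $(Ge)_*$ is strongly indecomposable. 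In particular $G$ is strongly indecomposable if and only if $E$ has no idempotents other than $0$ and $1$.

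For \textbf{existence}, since $E$ is Artinian its identity decomposes as a sum $1=e_1+\dots+e_t$ of pairwise orthogonal primitive idempotents. Setting $A_i=(Ge_i)_*$, the dictionary gives a strong quasi--decomposition $G\dot=\bigoplus_{i\in[t]}A_i$ with each $A_i$ strongly indecomposable. The only points needing care are that the purifications $A_i$ genuinely have independent supports summing to $[G]$ (immediate, since the subspaces $[G]e_i$ form a direct sum equal to $[G]$ as $E$--images of orthogonal idempotents) and that quasi--equality is compatible with assembling the summands, which follows from Proposition \ref{qprop} by clearing one common denominator.

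For \textbf{uniqueness}, the essential extra ingredient --- the one that fails in the ordinary category and is the whole reason for passing to quasi--isomorphism --- is that a strongly indecomposable $A$ has \emph{local} quasi--endomorphism ring $\bbQ\End(A)$. This is a Fitting--lemma argument inside the finite--dimensional algebra $\bbQ\End(A)$: a quasi--endomorphism $\varphi$, viewed as a linear map of $[A]$, satisfies $[A]=\Ker(\varphi^n)\oplus\Im(\varphi^n)$ for large $n$, and purifying these $\varphi$--invariant subspaces yields a quasi--decomposition of $A$, so by strong indecomposability $\varphi$ is either quasi--nilpotent or a quasi--automorphism; hence the non--units form the radical and $\bbQ\End(A)$ is local. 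Given two strong quasi--decompositions $\bigoplus_{i\in[t]}A_i$ and $\bigoplus_{j\in[m]}C_j$ of $G$, I would then run the classical exchange (Azumaya) argument in the additive quasi--category of $\calV$, whose morphisms are $\bbQ\otimes\Hom$ and in which isomorphism is exactly quasi--isomorphism $\approx$: using the idempotents attached to the $A_i$ and $C_j$ together with the locality just established, one exchanges summands one at a time to produce the permutation $\pi$ with $A_i\approx C_{i\pi}$, forcing $t=m$.

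The hard part is the uniqueness step, and within it the locality lemma: everything else is bookkeeping about idempotents and common denominators, whereas the passage from ``no nontrivial idempotents'' to ``local ring'' and the subsequent Azumaya exchange are where the finite--dimensionality of $E$ and the structure of the quasi--category do the real work. An alternative more in the spirit of Sections 3 and 4 would phrase uniqueness as transitivity of the $\bbQ^*\Aut(G)$--action (Proposition \ref{5.7}(4)) on strong quasi--decompositions within a fixed quasi--isomorphism class, mirroring Theorem \ref{main1}; but the local--endomorphism--ring input is unavoidable even there.
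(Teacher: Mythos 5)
The paper does not prove this statement at all: J\'onsson's Theorem is imported as a known result, cited to Fuchs \cite[Theorem 92.5]{F2}, and used as a black box (the paper even remarks, in the footnote to Corollary \ref{Jtheorem}, that its proof is ring--theoretical). Your proposal is correct and is essentially that standard proof --- the idempotent dictionary for $\bbQ\End(G)$, locality of the quasi--endomorphism ring of a strongly indecomposable group via Fitting's lemma, and the Azumaya exchange argument in the quasi--category --- so it matches the approach of the paper's cited source rather than offering a different route.
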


 \section{J\'onsson Bases}    
        
 Throughout this section, $0\ne G\in\calV$.  Let
 $\calJ $  be a  set of  representatives in $\calV$ of the isomorphism classes of the strongly indecomposable quasi--summands  of $G$. J\'onsson's Theorem implies that $\calJ$ is finite. Let $\Rep(G)$ be the set of all such $\calJ$.

Recall that endomorphisms of $G$ are identified with their extensions to $[G]$ and that for each  $J\in\calJ,\    J_*=[J]\cap G$. 
Recall too that set partitions and group decompositions are assumed to be non--trivial.

\begin{notation}\label{quo}  \begin{enumerate}\item A \textit{J\'onsson basis} of $G$ is a strong decomposition  $\bigoplus_i A_i$
 of finite index in $G$ such that each $A_i$ is pure in $G$.  
 \item $\Jon(G)$ is the set of all J\'onsson bases  of $G$. 
 \item If $A\in\Jon(G)$, the finite group $G/A$ is a \textit{J\'onsson quotient} of $G$.
\end{enumerate}  \end{notation}

\begin{remark}\label{remark} \begin{itemize}\item If $G$ is acd, $\Jon(G)$ consists   of   full completely decomposable subgroups;
\item  By Proposition \ref{5.7} (4),  $\Aut(G)$ acts on  ((strongly) indecomposable) summands.  The following proposition shows that  $\Aut(G)$ acts on J\'onsson bases.\end{itemize} \end{remark}

 \begin{proposition}\label{6.4}
 Let $\calJ\in\Rep(G)$. \begin{enumerate}\item For all $J\in\calJ$, there exists a maximum $n_J\in\bbN$ such that $\bigoplus_{J\in \calJ}(J_*)^{n_J}\in\Jon(G)$;
 \item  For all $A\in\Jon(G)$ and all $\alpha\in\Aut(G),\ A\alpha\in\Jon(G)$;

\item  For all $A\in\Jon(G)$, there exists $\alpha\in\Aut(G)$ such that $A\alpha=\bigoplus_{J\in\calJ} (J_*)^{n_J}$.
\end{enumerate}

 \end{proposition}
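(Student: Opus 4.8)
The plan is to prove the three statements in sequence, since each builds on the previous. Let me first set up the key object. Fix $\calJ\in\Rep(G)$, and note that by J\'onsson's Theorem $G$ has a strong quasi-decomposition $G\dot=\bigoplus_{J\in\calJ}(J_*)^{n_J}$ for suitable multiplicities $n_J\in\bbN$; this is where the representative set $\calJ$ comes from. The groups $(J_*)$ are pure in $G$ by construction, since $J_*=[J]\cap G$.

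\smallskip

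\textbf{Part (1).} I would argue that $\bigoplus_{J\in\calJ}(J_*)^{n_J}$ is a subgroup of $G$ of finite index. Purity of each $J_*$ follows from Proposition \ref{qprop}(4) and the definition $J_*=[J]\cap G$. The sum is direct because the subspaces $[J]$ are independent (their direct sum exhausts $[G]$ by the quasi-decomposition, and ranks add up). To see the index is finite, observe that the quasi-equality $G\dot=\bigoplus_J (J_*)^{n_J}$ gives, by Proposition \ref{qprop}(2), an integer $b$ with $b\,G\leq\bigoplus_J(J_*)^{n_J}\leq G$, so the quotient is bounded torsion of finite rank, hence finite. Thus $\bigoplus_J(J_*)^{n_J}\in\Jon(G)$. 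The existence of a \emph{maximum} such $n_J$ then follows from J\'onsson's uniqueness: the multiplicity of each isomorphism class of strongly indecomposable quasi-summand is an invariant of $G$, so $n_J$ is forced to equal the J\'onsson multiplicity and cannot be exceeded without violating the rank count $\sum_J n_J\rank(J)=\rank(G)$.

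\smallskip

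\textbf{Part (2).} This is the action statement, and it is the easiest. Given $A\in\Jon(G)$ and $\alpha\in\Aut(G)$, I want $A\alpha\in\Jon(G)$. Writing $A=\bigoplus_i A_i$ with each $A_i$ strongly indecomposable and pure, apply $\alpha$ termwise. By Lemma \ref{properties}(1), $(A_i)\alpha=(A_i\alpha)$ is again pure, since $\alpha$ carries pure subgroups to pure subgroups ($S_*\alpha=(S\alpha)_*$). Strong indecomposability is preserved because $\alpha$ extends to an automorphism of $[G]$ (hence lies in $\bbQ^*\Aut(G)$) and quasi-isomorphism is an isomorphism invariant: $A_i\approx A_i\alpha$. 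Finally $A\alpha$ still has finite index, as $\alpha$ is an automorphism of $G$ and $|G/A\alpha|=|G\alpha/A\alpha|=|G/A|$. This directly invokes Proposition \ref{5.7}(4), which already records that $\Aut(G)$ acts on strong quasi-decompositions.

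\smallskip

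\textbf{Part (3).} This is the real content and the main obstacle. Given an arbitrary $A\in\Jon(G)$, I must produce $\alpha\in\Aut(G)$ carrying $A$ onto the canonical J\'onsson basis $\bigoplus_{J\in\calJ}(J_*)^{n_J}$. Write $A=\bigoplus_i A_i$. By J\'onsson's Theorem applied to the strong quasi-decomposition $G\dot=A$, the summands $A_i$ are, up to permutation, quasi-isomorphic to the $J$'s with the correct multiplicities $n_J$; so there is a bijection matching each $A_i$ to a copy of some $J_*$. The hard part is upgrading this bijection-with-quasi-isomorphisms into a single honest automorphism of $G$. The strategy mirrors the proof of Proposition \ref{hilfsatz}: choose quasi-isomorphisms $\alpha_i\colon A_i\to$ the matching $(J_*)$-copy, and assemble $\alpha=(\alpha_i)$ on $[G]=\bigoplus_i[A_i]$. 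The subtlety is that each $\alpha_i$ is only a \emph{quasi}-isomorphism, so $\alpha\in\bbQ^*\Aut([G])$ a priori, not $\Aut(G)$; I would need to clear denominators carefully and invoke the purity of both $A_i$ and $(J_*)$ (Proposition \ref{qprop}(4)) to show that after normalization the assembled map restricts to an automorphism of $G$ itself, using that both $A$ and $\bigoplus_J(J_*)^{n_J}$ are full (finite index) so that an isomorphism of one onto the other extends uniquely to $[G]$ and stabilizes $G$. Verifying that this normalized $\alpha$ genuinely lands in $\Aut(G)$ rather than merely $\bbQ^*\Aut(G)$ is where the argument will require the most care.
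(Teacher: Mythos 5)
Your parts (1) and (2) are correct and essentially identical to the paper's argument: (1) pins down $n_J$ via J\'onsson uniqueness and checks that $\bigoplus_{J\in \calJ}(J_*)^{n_J}$ is a strong decomposition of finite index with pure summands, and (2) applies $\alpha$ termwise, exactly as the paper does (your extra detail via Proposition \ref{qprop}(2) and Lemma \ref{properties}(1) is harmless).

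Part (3) is where your proposal has a genuine gap, one you flag yourself but never close. You build the candidate map out of \emph{quasi}-isomorphisms $\alpha_i\colon A_i\to (J_*)\text{-copies}$ supplied by J\'onsson's Theorem, and then hope that ``clearing denominators'' plus purity will push the assembled map into $\Aut(G)$. That repair cannot work as described: if $\alpha_i$ is only a quasi-isomorphism, then an integer multiple $m\alpha_i$ maps $A_i$ \emph{into} but in general not \emph{onto} the target copy of $J_*$, so after any normalization the assembled map carries $A$ to a proper finite-index subgroup of $\bigoplus_{J\in\calJ}(J_*)^{n_J}$, whereas (3) demands the equality $A\alpha=\bigoplus_{J\in\calJ}(J_*)^{n_J}$. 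Nor does fullness-plus-purity force such a map to come from $\Aut(G)$: already in rank one, the isomorphism $2\bbZ\cong 3\bbZ$ of full subgroups of $\bbZ$ extends on $\bbQ$ to multiplication by $3/2$, which is not an automorphism of $\bbZ$. The point you missed is the definition of $\calJ$ in Section 6: it is a set of representatives of the \emph{isomorphism} classes, not the quasi-isomorphism classes, of the strongly indecomposable quasi-summands of $G$. Since each summand $A_i$ of $A\in\Jon(G)$ is itself such a quasi-summand, it is honestly isomorphic to some $J\in\calJ$; the paper's proof uses J\'onsson's Theorem only to match multiplicities (producing the partition $\calJ=\dot\cup\,\calJ_i$ and the $m_J$ with $\sum_i m_J=n_J$), takes honest isomorphisms $\alpha_i$ of each $A_i$ onto the corresponding sum of the groups $J_*$, and sets $\alpha=\sum_i\alpha_i$, so no denominators ever appear. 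Your instinct that the delicate point is whether the assembled map lies in $\Aut(G)$ rather than merely $\bbQ^*\Aut(G)$ is sound --- the paper's own proof passes over this verification in silence --- but starting from quasi-isomorphisms makes that verification impossible, while starting from the isomorphisms built into the definition of $\calJ$ is precisely what the paper's argument does.
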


\begin{proof} (1) Let $n_J$ be the number of isomorphic copies of $J$ which occur in some strong decomposition of finite index in $G$. By J\'onsson's Theorem, $n_J$ is uniquely determined.
The group $\bigoplus_{J\in \calJ}(J_*)^{n_J}$ is a strong decomposition of finite index   in   $ G$ in which each summand $J_*$ is a pure subgroup of $G$. 

(2)  If  $A=\bigoplus_{i\in[t]}A_i$, then $A\alpha=\bigoplus_{i\in[t]}(A_i\alpha)$, a strong decomposition of finite index in $G$.

  (3)  For all $A=\bigoplus_{i\in[t]}A_i\in\Jon(G)$,      there exists $\alpha_i\in\Aut(A_i)$, a partition $\calJ=\dot\cup \calJ_i$ and $m_J\in[0, n_J]$ with $\sum m_j=n_j$, such that $A_i\alpha_i=\bigoplus_{J\in \calJ_i}(J_*)^{m_J}$. Take $\alpha=\sum_i\alpha_i$.   Then $A\alpha = \bigoplus_{J\in \calJ}(J_*)^{n_J}$
\end{proof}

We now show the relation between decompositions of $G$ and  $\Jon(G)$. To clarify the notation,  the decompositions of $A\in\Jon(G)$ in the following proposition, are not  necessarily their  decompositions into pure strongly indecomposable summands described in Notation 6.1 (1).

 \begin{proposition}\label{TFAE}  
 \begin{enumerate} 

\item If $G=\bigoplus_{i\in[t]} H_i$ and $A_i\in\Jon(H_i)$, then $\bigoplus_{i\in[t]} A_i\in\Jon(G)$;
\item Let $A=\bigoplus_{i\in[t]} B_i\in\Jon(G)$, then $\bigoplus_{i\in[t]}B_{i*}\dot=G$ and  $\bigoplus_{i\in[t]}B_{i*} =G$ if and only if $\sum_{i\in[t]}B_{i*}$ is pure in $G$.
\item If $A=\bigoplus_{i\in[t]} B_i\in\Jon(G)$, then for all $J\in\calJ$, there exists a partition $n_J=\sum_{i\in[t]} (m_{i})_J$  (where we allow some terms to be 0),  such that $B_{i*}\cong \left(\bigoplus_{J\in\calJ}(J_*)^{(m_{i})_J}\right)_*$.
\end{enumerate}  

\end{proposition}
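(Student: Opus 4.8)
The plan is to treat the three assertions in increasing order of difficulty, disposing of (1) and (2) by subspace/purity bookkeeping and concentrating the real work on (3). For (1) I would check the three defining features of a J\'onsson basis separately. Expanding each $A_i$ as its strong decomposition $\bigoplus_k A_{ik}$ into pure strongly indecomposable summands of $H_i$, the group $\bigoplus_i A_i=\bigoplus_{i,k}A_{ik}$ is again a strong decomposition; its index in $G=\bigoplus_i H_i$ is the finite product $\prod_i[H_i:A_i]$; and each $A_{ik}$ is pure in $H_i$ while $H_i$, being a direct summand, is pure in $G$, so by transitivity of purity $A_{ik}$ is pure in $G$. Hence $\bigoplus_iA_i\in\Jon(G)$, and this step is routine.

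For (2) the key point is that $A=\bigoplus_iB_i$ has finite index in $G$, so $[A]=[G]=\bigoplus_i[B_i]$ with the subspaces $[B_i]$ independent. Since $B_{i*}=[B_i]\cap G$ lies in $[B_i]$, independence forces $\sum_iB_{i*}$ to be direct, and $\bigoplus_iB_{i*}\supseteq\bigoplus_iB_i=A$ then has finite index in $G$; this is exactly $\bigoplus_iB_{i*}\dot=G$. For the equality criterion I would use the general fact that a pure subgroup of finite index must be the whole group: if $H\le G$ is pure with $G/H$ torsion, then for $\bar g\in G/H$ of order $n$ one has $ng\in H\cap nG=nH$, whence $g\in H$ and $G/H=0$. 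Applied to $H=\bigoplus_iB_{i*}$ this yields $\bigoplus_iB_{i*}=G$ if and only if it is pure in $G$, the forward direction being trivial; this is the analogue of Proposition \ref{dirdec}(1).

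The substance is (3), where I would first reduce to the canonical J\'onsson basis. By Proposition \ref{6.4}(3) there is $\alpha\in\Aut(G)$ with $A\alpha=\bigoplus_{J\in\calJ}(J_*)^{n_J}$, and since $S_*\alpha=(S\alpha)_*$ by Lemma \ref{properties}(1), replacing $A=\bigoplus_iB_i$ by $A\alpha=\bigoplus_i(B_i\alpha)$ alters each $B_{i*}$ only up to isomorphism, so I may assume $A$ is the canonical basis itself. Next I would extract the multiplicity partition: as $A\in\Jon(G)$ gives $A\dot=G$, Theorem \ref{Jonsson} yields $A\approx\bigoplus_JJ^{n_J}$, while each summand $B_i$ is a quasi--summand of $A$, so J\'onsson's Theorem applied to $B_i$ gives $B_i\approx\bigoplus_JJ^{(m_i)_J}$ with uniquely determined multiplicities; summing over $i$ and invoking the uniqueness clause forces $\sum_i(m_i)_J=n_J$ for every $J$, which is the asserted partition.

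The final step, the isomorphism $B_{i*}\cong\bigl(\bigoplus_J(J_*)^{(m_i)_J}\bigr)_*$, is where I expect the main obstacle, since a bare quasi--isomorphism of pure subgroups of $G$ need not be an isomorphism. The mechanism I would exploit is that strongly indecomposable groups from distinct quasi--isomorphism classes admit no quasi--isomorphisms between them, so each $B_i$ must draw its constituents from definite quasi--isotypic blocks of the canonical decomposition; combined with the fact that purification $[\,\cdot\,]\cap G$ depends only on the spanned subspace, this should let me match $B_{i*}=[B_i]\cap G$ with the purification of the corresponding sub--collection $\bigoplus_J(J_*)^{(m_i)_J}$ by transporting along an automorphism supplied by Proposition \ref{6.4}(3), after which Proposition \ref{qprop}(4) collapses the residual quasi--equality of two pure subgroups to genuine equality. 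Controlling this block decomposition rigorously, rather than merely up to quasi--isomorphism, is the delicate part of the argument.
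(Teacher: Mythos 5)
Your proofs of (1) and (2) are correct and are, in slightly expanded form, exactly the paper's: (1) is the same strong--decomposition/finite--index/transitivity--of--purity check, and (2) is the paper's observation that the $B_{i*}$ are independent pure subgroups whose direct sum contains $A$ and hence has finite index, combined with the standard fact that a pure subgroup with torsion quotient is the whole group. Likewise your derivation of the multiplicity partition $n_J=\sum_{i}(m_i)_J$ in (3), via J\'onsson's Theorem (Theorem \ref{Jonsson}) applied to each quasi--summand $B_i$ of $A\dot=G$ together with its uniqueness clause, is the same as the paper's.

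The genuine gap is the one you flag yourself: the isomorphism $B_{i*}\cong\bigl(\bigoplus_{J\in\calJ}(J_*)^{(m_i)_J}\bigr)_*$ is never established, and the mechanism you sketch would not establish it. The ``quasi--isotypic block'' step needs homomorphisms between strongly indecomposable groups in distinct quasi--isomorphism classes to vanish, which is false ($\Hom(\bbZ,\bbQ)\neq 0$ already, and any two rank--one groups of distinct comparable types give further examples), so nothing forces $B_i$ to decompose along blocks. Moreover Proposition \ref{qprop}(4) is of no use until the two pure subgroups have the same linear span: $[B_i]$ and $[\bigoplus_{J}(J_*)^{(m_i)_J}]$ are in general different subspaces of $[G]$, so one must first produce an automorphism of $G$ itself (not of $[G]$, and not a mere abstract isomorphism of subgroups, which need not extend to $G$) carrying one onto the other; producing it is exactly the missing content, and Proposition \ref{6.4}(3) does not supply it, since it normalizes $A$ but says nothing about an arbitrary decomposition $\bigoplus_i B_i$ of $A$. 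For calibration you should know that the paper's own proof of (3) does not close this step either: it reads off from J\'onsson's Theorem that each $B_i$ is ``isomorphic'' --- not merely quasi--isomorphic --- to a sub--sum of the canonical basis and then cites Proposition \ref{qprop}(3); the upgrade from quasi--isomorphism to isomorphism of purifications, which you correctly isolate as the crux, is asserted there rather than proved. So you match the paper on (1), (2) and the partition, and you are honest about, but do not close, the one step the paper also leaves essentially unjustified.
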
 
\begin{proof} (1)  $\bigoplus_{i\in[t]} A_i$ is a strong decomposition of finite index in $G$ whose summands are pure in $H_i$ and hence in $G$.

(2) The groups $B_{i*}$ are disjoint pure subgroups of $G$ whose sum has   finite index in $G$. Hence $\sum_{i\in[t]}B_{i*}$ is pure in $G$ if and only if $G=\bigoplus_{i\in[t]}B_{i*} $.

(3) By J\'onsson's Theorem, each $B_i$  is isomorphic to a direct sum of  elements of the set $\calJ$ and their sum accounts for all of them. Hence by  and Proposition \ref{qprop} (3), $B_{i*}\cong \left(\bigoplus_{J\in\calJ}(J_*)^{(m_{i})_J}\right)_*$ for some ${(m_{i})_J}\in[0, n_J]$ with $\sum_{i\in[t]} (m_{i})_J=n_J$. 
\end{proof}

\begin{corollary}\label{Jtheorem}   
 \textbf{Lady's Theorem}, \cite[Theorem 6.9]{Fuchs}
Let $G\in\calV$. Then $G$ has only finitely many non--isomorphic  summands.\footnote{In his recent edition of \lq Abelian Groups\rq\ \cite[Lemma 6.8]{Fuchs} Fuchs  states that since Lady's Theorem is one of the most important results in torsion--free abelian group theory, a group--theoretical proof would be most welcome. Our proof replaces  the Jordan--Zassenhaus Lemma in Lady's proof by J\'onsson's Theorem, whose proof,  while still ring theoretical, is rather more transparent.}
 \end{corollary}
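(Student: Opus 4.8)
The plan is to show that every summand of $G$ is isomorphic to one of finitely many explicit subgroups determined by a single strong quasi--decomposition of $G$. Fix $\calJ\in\Rep(G)$; by J\'onsson's Theorem (Theorem \ref{Jonsson}) this set is finite, and by Proposition \ref{6.4}(1) there are well--defined maximal multiplicities $n_J\in\bbN$ with $\bigoplus_{J\in\calJ}(J_*)^{n_J}\in\Jon(G)$. These $n_J$ will supply the combinatorial bound.

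First I would take an arbitrary summand $H$ of $G$, say $G=H\oplus K$, and choose J\'onsson bases $A_H\in\Jon(H)$ and $A_K\in\Jon(K)$, which exist by J\'onsson's Theorem applied to $H$ and to $K$. By Proposition \ref{TFAE}(1), $A:=A_H\oplus A_K\in\Jon(G)$. Viewing this as the two--block decomposition $A=B_1\oplus B_2$ with $B_1=A_H$ and $B_2=A_K$, Proposition \ref{TFAE}(3) then yields a partition $n_J=(m_1)_J+(m_2)_J$ for each $J\in\calJ$ such that $B_{1*}\cong\bigl(\bigoplus_{J\in\calJ}(J_*)^{(m_1)_J}\bigr)_*$, where $0\leq(m_1)_J\leq n_J$.

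The key identification is $B_{1*}=H$. Since $H$ is a direct summand of $G$ it is pure in $G$, and $A_H$ has finite index in $H$, so $[B_1]=[A_H]=[H]$ and hence $B_{1*}=[B_1]\cap G=[H]\cap G=H$. Consequently each summand $H$ is isomorphic to $\bigl(\bigoplus_{J\in\calJ}(J_*)^{(m_1)_J}\bigr)_*$ for some vector $\bigl((m_1)_J\bigr)_{J\in\calJ}$ with $(m_1)_J\in[0,n_J]$. As $\calJ$ is finite and each multiplicity ranges over the finite set $[0,n_J]$, there are only finitely many such vectors, hence only finitely many isomorphism types among the groups $\bigl(\bigoplus_{J\in\calJ}(J_*)^{(m_1)_J}\bigr)_*$, and therefore only finitely many isomorphism classes of summands of $G$.

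The main obstacle is precisely the passage from quasi--isomorphism to genuine isomorphism: J\'onsson's Theorem by itself only bounds the number of quasi--isomorphism classes of summands, and a single quasi--isomorphism class of finite--rank groups may contain infinitely many isomorphism classes. What rescues the argument is that Proposition \ref{TFAE}(3) produces not merely a quasi--isomorphism but an honest isomorphism onto the purification, computed inside the \emph{fixed} ambient group $G$; pinning the purification down in $G$ collapses each multiplicity vector to a single isomorphism type. I would take care to verify that the hypotheses of Proposition \ref{TFAE}(1),(3) are met, in particular that $A_H\oplus A_K$ really is a strong decomposition of finite index in $G$ with pure summands, and that the purity of summands is used exactly where the identification $B_{1*}=H$ requires it.
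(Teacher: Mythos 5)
Your proposal is correct and takes essentially the same route as the paper's own proof: the paper also fixes $\calJ\in\Rep(G)$ and invokes Proposition \ref{TFAE} to write any summand $H$ as $H=A_*$ with $A\cong\bigoplus_{J\in\calJ}(J_*)^{m_J}$, $m_J\in[0,n_J]$, then concludes from the finiteness of $\calJ$ and of the multiplicity vectors. The only difference is that you spell out the steps the paper leaves implicit, namely forming $A_H\oplus A_K\in\Jon(G)$ via Proposition \ref{TFAE}(1) and justifying the identification $B_{1*}=H$ from the purity of $H$ and the finite index of $A_H$ in $H$.
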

\begin{proof}  If $H$ is a summand of $G$ and $\calJ\in\Rep(G)$, then By Proposition \ref{TFAE},  $H=A_*$ for some    $A\cong \bigoplus_{J\in\calJ}(J_*)^{m_j},\ m_j\in[0,n_j]$.
 There are only finitely many choices for $J$ and   $m_J$.
\end{proof}

\begin{notation}\label{splitt}Let $A\in\Jon(G)$.
\begin{itemize}\item 
A decomposition  $A=\bigoplus_{i\in[t]}A_i$  is a \textit{splitting decomposition}  of $A$ if $G=\bigoplus_{i\in[t]}A_{i*}$;

\item $A$ is \textit{non--split} if $A$ has no splitting decomposition;
\item  $A=\bigoplus_{i\in[t]}A_i$ is a  \textit{complete splitting decomposition} if it is a splitting decomposition and each $A_i$ is non--split.
\end{itemize}

\end{notation}

Propositions \ref{6.4} and \ref{TFAE} have the following immediate corollaries:

\begin{corollary}\label{converse}\begin{enumerate}
\item  $G$ is indecomposable if and only if every  $A\in\Jon(G)$ is non--split;

\item If $G=\bigoplus_{i\in[t]} H_i$ then for all $A_i\in\Jon(H_i),\ \bigoplus_{i\in[t]} A_i$ is a  splitting decomposition of $A\in\Jon(A)$. The decomposition of $G$   is complete if and only if the splitting decomposition of $A$ is.
\qed\end{enumerate}
\end{corollary}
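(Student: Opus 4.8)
The plan is to reduce every statement about a direct decomposition of a group to a statement about a splitting decomposition of a J\'onsson basis, the bridge being Proposition \ref{TFAE}(1) together with the elementary fact that if $A\in\Jon(H)$ and $H$ is pure in $G$ then $A_*=H$. Indeed $A$ has finite index in $H$, so $[A]=[H]$, whence $A_*=[A]\cap G=[H]\cap G=H$; I would record this observation first, since it is used throughout and in particular applies whenever $H$ is a direct summand of $G$.

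For part (1) I would prove both implications directly. If $G$ is indecomposable and some $A=\bigoplus_{i} A_i\in\Jon(G)$ had a splitting decomposition $A=\bigoplus_{j\in[s]} C_j$ with $s\geq 2$ (the blocks $C_j$ need not be the given $A_i$), then by definition $G=\bigoplus_{j\in[s]}(C_j)_*$ with each $(C_j)_*\neq 0$, contradicting indecomposability; hence every J\'onsson basis is non-split. For the converse I argue contrapositively: if $G=H\oplus K$ nontrivially, choose $A_H\in\Jon(H)$ and $A_K\in\Jon(K)$, which exist by Proposition \ref{6.4}(1). Then $A:=A_H\oplus A_K\in\Jon(G)$ by Proposition \ref{TFAE}(1), and since $(A_H)_*=H$ and $(A_K)_*=K$ we have $G=(A_H)_*\oplus(A_K)_*$, so this $A$ is split. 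Thus $G$ decomposable yields a split J\'onsson basis, which is the contrapositive of the claim.

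For part (2), given $G=\bigoplus_{i\in[t]}H_i$ and $A_i\in\Jon(H_i)$, Proposition \ref{TFAE}(1) gives $A:=\bigoplus_i A_i\in\Jon(G)$, and the observation above gives $(A_i)_*=H_i$, so $G=\bigoplus_i (A_i)_*$ and $\bigoplus_i A_i$ is by definition a splitting decomposition of $A$. It then remains to match the two completeness notions, i.e.\ to show that each $H_i$ is indecomposable if and only if each $A_i$ is non-split. One direction is immediate from part (1) applied to $H_i$: if $H_i$ is indecomposable then \emph{every} member of $\Jon(H_i)$ is non-split, in particular $A_i$.

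The main obstacle is the reverse direction of this equivalence, because part (1) only furnishes \emph{some} split J\'onsson basis of a decomposable $H_i$, whereas I need the \emph{particular} $A_i$ to split. The resolution is to show that being split is independent of the chosen J\'onsson basis. For this I would invoke Proposition \ref{6.4}(2),(3): every member of $\Jon(H_i)$ is carried by an automorphism of $H_i$ onto the canonical basis $\bigoplus_{J}(J_*)^{n_J}$, so $\Aut(H_i)$ acts on $\Jon(H_i)$ with a single orbit; and splitting is an orbit invariant, since if $A=\bigoplus_j C_j$ with $H_i=\bigoplus_j (C_j)_*$ then $A\alpha=\bigoplus_j C_j\alpha$ and, by Lemma \ref{properties}(1) and the fact that $\alpha\in\Aut(H_i)$, $H_i=H_i\alpha=\bigoplus_j(C_j\alpha)_*$. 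Hence if $H_i$ is decomposable, part (1) gives a split J\'onsson basis and orbit invariance forces $A_i$ itself to split. Combining the two directions, each $A_i$ is non-split exactly when each $H_i$ is indecomposable, which is precisely the asserted equivalence of completeness of $\bigoplus_i A_i$ and of $\bigoplus_i H_i$.
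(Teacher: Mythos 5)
Your proof is correct and follows the same route the paper intends: the paper gives no written argument, presenting the corollary as immediate from Propositions \ref{6.4} and \ref{TFAE}, and your derivation uses exactly those ingredients (together with Lemma \ref{properties}(1) and the purity observation $A_*=H$). You are in fact more careful than the paper, since you isolate the one genuinely non-trivial point --- part (1) only produces \emph{some} split J\'onsson basis of a decomposable $H_i$, so the transitivity of the $\Aut(H_i)$-action from Proposition \ref{6.4}(3), combined with the orbit-invariance of splitness, is needed to conclude that the \emph{given} $A_i$ splits --- a step the paper's ``immediate'' glosses over.
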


\begin{theorem} \label{actsonindec}$\Aut(G)$ acts on complete  decompositions of $G$ and on complete splitting J\'onsson bases. In both cases, the orbits are the isomorphism classes of complete  decompositions and complete splitting   J\'onsson bases. 
\end{theorem}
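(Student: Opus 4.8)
The plan is to prove the two assertions of Theorem~\ref{actsonindec} in parallel, reducing each to machinery already assembled. First I would establish that $\Aut(G)$ acts on the complete decompositions of $G$. By Proposition~\ref{6.4}~(2), $\Aut(G)$ already acts on $\Jon(G)$, so it suffices to verify that an automorphism carries complete splitting J\'onsson bases to complete splitting J\'onsson bases. Given $A=\bigoplus_{i\in[t]}A_i\in\Jon(G)$ and $\alpha\in\Aut(G)$, we have $A\alpha=\bigoplus_{i\in[t]}(A_i\alpha)\in\Jon(G)$ by Proposition~\ref{6.4}~(2); I would then invoke Lemma~\ref{properties}~(1), namely $A_{i*}\alpha=(A_i\alpha)_*$, together with Lemma~\ref{properties}~(2) preserving independence of summands, to conclude that $G=\bigoplus_{i\in[t]}A_{i*}$ holds if and only if $G=\bigoplus_{i\in[t]}(A_i\alpha)_*$. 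Thus $\alpha$ sends splitting decompositions to splitting decompositions and, since $\alpha$ restricts to an isomorphism on each summand, it preserves the non--split property of each $A_i$. This gives the group action on complete splitting J\'onsson bases, and via the correspondence in Corollary~\ref{converse}~(2) the induced action on complete decompositions of $G$.

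Next I would identify the orbits. For complete decompositions of $G$, the argument parallels Theorem~\ref{main1} exactly, but now in the quasi--category. I would show that two complete splitting J\'onsson bases $\calB=\bigoplus_{i\in[t]}A_i$ and $\calC=\bigoplus_{j\in[s]}C_j$ lie in the same $\Aut(G)$--orbit if and only if $t=s$ and there is a permutation $\pi$ with $A_i\cong C_{i\pi}$ for all $i$. The forward direction is immediate from Lemma~\ref{properties}~(1),~(3) since an automorphism restricts to isomorphisms on the pure summands. For the converse, given isomorphisms $\alpha_i\colon A_i\to C_{i\pi}$, I would assemble $\alpha=\sum_i\alpha_i$, exactly as in the proof of Proposition~\ref{hilfsatz}, and check that this is a well--defined automorphism of $G$: because both $\bigoplus A_{i*}$ and $\bigoplus C_{j*}$ equal $G$ (splitting), the direct sum of the $\alpha_i$ on the pure closures restricts to the identity on no proper part and extends to all of $G$. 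The translation between J\'onsson bases and the pure decompositions $G=\bigoplus A_{i*}$ is supplied by Proposition~\ref{TFAE}~(2) and Corollary~\ref{converse}.

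The step I expect to be the main obstacle is verifying that the assembled map $\alpha=\sum_i\alpha_i$ is genuinely an automorphism of $G$ and not merely of the finite--index subgroup $A$. The isomorphisms $\alpha_i\colon A_i\to C_{i\pi}$ are initially maps between the J\'onsson summands, whereas the decomposition that matters for $G$ is $G=\bigoplus_i A_{i*}$ into pure closures. I would resolve this by passing each $\alpha_i$ to its unique extension to $[A_i]=[A_{i*}]$ (the identification of endomorphisms with their extensions to $[G]$, as fixed in Section~2), observing that $A_{i*}\alpha_i=(A_i\alpha_i)_*=(C_{i\pi})_*=C_{i\pi*}$ by Lemma~\ref{properties}~(1). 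Since the $A_{i*}$ give a genuine direct decomposition of $G$ and likewise the $C_{j*}$, the extended map $\alpha$ carries $G=\bigoplus A_{i*}$ isomorphically onto $G=\bigoplus C_{j*}=G$, hence $\alpha\in\Aut(G)$. Once this is in place, the orbit description for complete splitting J\'onsson bases follows from the same matching--and--assembly argument, and both halves of the theorem are complete.
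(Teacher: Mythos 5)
Your first half is sound: the verification that $\Aut(G)$ acts on complete splitting J\'onsson bases (via Proposition~\ref{6.4}(2) and Lemma~\ref{properties}(1)) and the forward orbit direction (same orbit $\Rightarrow$ isomorphic summands) agree with the paper, in rather more detail than the paper gives. The genuine gap is in your converse direction, at exactly the step you flagged as the main obstacle, and your proposed resolution of it is circular. Lemma~\ref{properties}(1) is a statement about automorphisms of $G$: its proof is $S_*\alpha=([S]\cap G)\alpha=[S]\alpha\cap G\alpha=[S\alpha]\cap G$, which uses $G\alpha=G$. You apply it to the maps $\alpha_i\colon A_i\to C_{i\pi}$, but whether the assembly of these maps preserves $G$ is precisely what is to be proved. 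Purification is taken relative to $G$, and the unique extension of an abstract isomorphism between subgroups need not respect it: the isomorphism $2\bbZ\to 3\bbZ$, $2\mapsto 3$, extends to multiplication by $3/2$, which sends $(2\bbZ)_*=\bbZ$ to $\tfrac32\bbZ\ne(3\bbZ)_*$. Since $A_i$ sits in $A_{i*}$ with finite index, all you can legitimately conclude is $(A_{i*})\alpha_i\dot=(C_{i\pi})_*$, a quasi-equality, and quasi-equal groups need not be equal or even isomorphic.

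Worse, the criterion you reduce to --- $\calB$ and $\calC$ in the same orbit iff $t=s$ and $A_i\cong C_{i\pi}$ for all $i$ --- is false, so the step cannot be repaired within your framework. Take a classical failure of Krull--Schmidt for finite rank groups (see \cite[\S 90]{F2}): $G=X_1\oplus X_2=Y_1\oplus Y_2$ with all four summands indecomposable rank-two acd groups, each containing a subgroup $\sigma u\oplus\rho v$ (fixed types $\sigma,\rho$) of index $p$, but with $X_i\not\cong Y_j$ for all $i,j$. The two associated complete splitting J\'onsson bases have all summands isomorphic to $\sigma\oplus\rho$, hence are isomorphic in your summand-wise sense; yet if some $\alpha\in\Aut(G)$ carried one onto the other, then Lemma~\ref{properties}(1) --- now legitimately applied --- would give $X_i\alpha=(A_i\alpha)_*=(C_{i\pi})_*=Y_{i\pi}$, forcing $X_i\cong Y_{i\pi}$, a contradiction. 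So the bare summands $A_i$ carry strictly less information than their purifications. This is where your route diverges from the paper's: the paper assembles isomorphisms one level up, between the summands $H_i$ and $K_j$ of two complete decompositions of $G$ itself, where $\bigoplus_i H_i=G=\bigoplus_j K_j$ makes the assembled map an automorphism of $G$ with no purification issue, and the J\'onsson-basis statement is then read through the correspondence of Corollary~\ref{converse}, i.e.\ isomorphism of complete splitting J\'onsson bases means isomorphism of the corresponding decompositions $\bigoplus_i A_{i*}$ of $G$, not pairwise isomorphism of the $A_i$. Your proposal inverts that order --- J\'onsson bases first, decompositions second --- and the inversion is what creates the gap.
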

\begin{proof}

Let $\calD\colon  \bigoplus_{i\in[t]}H_i$ be an indecomposable decomposition of $G$ and  $\bigoplus_{i\in[t]} A_i$ a corresponding complete splitting J\'onsson basis.  Let $\alpha\in\Aut(G)$. Then
$\bigoplus_{i\in[t]}H_i\alpha$ is also an indecomposable decomposition and $\bigoplus_{i\in[t]} A_i\alpha$ the corresponding complete splitting J\'onsson basis. 

On the other hand, let $\calE\colon \bigoplus_{i\in[t]}K_i$ be an indecomposable decomposition of $G$ with each $K_i$ isomorphic to a unique $H_i$ by some isomorphism $\alpha_{i}$.
Then $\alpha=(\alpha_i\colon i\in [t]) \in\Aut(G)$, so $\calE$ is in the orbit of $\calD$. Thus the orbit of $\calD$ under $\Aut(G)$ consists of isomorphic indecomposable decomposition.

The argument extends to the corresponding J\'onsson bases.
\end{proof}
\begin{remark} It is clear that each decomposition of $G$ refines to a complete decomposition,  and each splitting decomposition of $A\in\Jon(G)$ refines to a complete splitting decomposition. In neither case is the refinement necessarily unique.
\end{remark}
\subsection{J\'onsson Quotients}

Let $G\in\calV$ and $A\in\Jon(G)$.
    Let $\eta\colon G\to G/A$ be the natural surjection,  and let $  U\oplus   W$ be a decomposition of $ G/A$. We say $U\oplus W$ \textit{lifts  to $G$} if there exists a decomposition $G=K\oplus L$ such that $U=  K\eta$ and $W=  L\eta$.

 \begin{proposition}\label{identifies}  $G/A=U\oplus W$ is a decomposition lifting to $G=K\oplus L$ if and only if $A$ has a splitting decomposition $B\oplus C$ such that $U\cong B_*/B$ and $W=C_*/C$.
 \end{proposition}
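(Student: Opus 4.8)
The plan is to prove both directions by exploiting the correspondence between decompositions of $G/A$ and splitting decompositions of $A$, using the natural surjection $\eta$ and the purification operator $(\cdot)_*$ throughout. The key structural fact is that $A$ has finite index in $G$, so $\eta$ is surjective with finite kernel-free image $G/A$, and for any subgroup $B\leq A$ the image $B\eta$ is related to the quotient $B_*/B$ once we identify $B_*\cap A$ appropriately.

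For the forward direction, I would start from a lift $G=K\oplus L$ with $U=K\eta$ and $W=L\eta$. The natural candidate for the splitting decomposition of $A$ is $B=A\cap K$ and $C=A\cap L$. First I would check that $A=B\oplus C$: since $K\oplus L=G$ and $A\leq G$, every $a\in A$ decomposes uniquely as $a=k+\ell$, and the goal is to show $k,\ell\in A$; this uses that $A\in\Jon(G)$ has each summand pure together with Lemma~\ref{properties} and Proposition~\ref{dirdec}. Next I would verify that $B\oplus C$ is a splitting decomposition in the sense of Notation~\ref{splitt}, i.e.\ $G=B_*\oplus C_*$; here $B_*=[B]\cap G$ and since $B\leq K$ purely one expects $B_*=K$ and similarly $C_*=L$, giving $G=K\oplus L$ back. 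Finally I would identify the quotients: because $K/B=K/(A\cap K)\cong (K+A)/A=K\eta=U$, and $K=B_*$, this yields $U\cong B_*/B$, and symmetrically $W\cong C_*/C$.

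For the converse, I would begin with a splitting decomposition $A=B\oplus C$, so by definition $G=B_*\oplus C_*$. Setting $K=B_*$ and $L=C_*$ gives the desired decomposition of $G$, and it remains to compute the images under $\eta$. The computation $K\eta=(K+A)/A$ combined with $K=B_*$ and $A=B\oplus C$ should produce $K\eta\cong B_*/B$, matching $U$, and likewise $L\eta$ matches $W$. The main point is that $C\leq L=C_*$ and $B\leq K=B_*$, so $A=B\oplus C\leq K\oplus L=G$ sits diagonally in a way that makes $\eta$ restrict correctly on each summand.

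The main obstacle I anticipate is the careful bookkeeping in the forward direction showing $A\cap K$ and $A\cap L$ actually direct-sum to all of $A$, rather than merely to a finite-index subgroup. This requires genuinely using that each summand of the J\'onsson basis $A$ is \emph{pure} in $G$, so that the decomposition $G=K\oplus L$ splits $A$ cleanly; without purity one only gets $A\dot=(A\cap K)\oplus(A\cap L)$ by Proposition~\ref{qprop}, and upgrading quasi-equality to equality (via Proposition~\ref{qprop}~(4)) is exactly where the hypothesis $A\in\Jon(G)$ must be invoked. The isomorphism versus equality mismatch in the statement (that $U\cong B_*/B$ but $W=C_*/C$) suggests one identification is literal and the other only up to the canonical iso induced by $\eta$, and I would track which is which when pinning down the lifting maps.
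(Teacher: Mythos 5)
Your converse direction is correct and coincides with the paper's: a splitting decomposition $A=B\oplus C$ gives $G=B_*\oplus C_*$ (Notation \ref{splitt}, via Proposition \ref{TFAE}(2)), and then $G/A=B_*/B\oplus C_*/C$ lifts to that decomposition. In the forward direction you also choose the same subgroups as the paper, $B=A\cap K$ and $C=A\cap L$, and you correctly isolate the crux: showing that when $a\in A$ is written as $a=k+\ell$ with $k\in K,\ \ell\in L$, one has $k,\ell\in A$. But your proposed mechanism for this step is wrong, and this is a genuine gap. You attribute it to purity of the summands of the J\'onsson basis $A$, together with Proposition \ref{qprop}(4) to upgrade $A\dot=(A\cap K)\oplus(A\cap L)$ to equality. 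Proposition \ref{qprop}(4) cannot be applied here: it requires both subgroups to be \emph{pure} in $G$, whereas $A$ and $(A\cap K)\oplus(A\cap L)$ have finite index in $G$ and so are never pure (unless equal to $G$). Worse, purity of the summands of $A$ is simply insufficient: take $G=\bbZ^2$, $A=\la(1,1)\ra\oplus\la(1,-1)\ra=\{(x,y)\colon x\equiv y \pmod 2\}$, which lies in $\Jon(G)$ with both summands pure, and $K=\bbZ\oplus 0$, $L=0\oplus\bbZ$. Then $(A\cap K)\oplus(A\cap L)=2\bbZ\oplus 2\bbZ$ is a proper subgroup of $A$, so the decomposition $G=K\oplus L$ does \emph{not} split $A$ cleanly.

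What makes the step work is the hypothesis you never invoke: that $U=K\eta$ and $W=L\eta$ with $U\cap W=0$. (In the example above this fails: $K\eta=L\eta=G/A$.) The paper's argument is exactly this and nothing more: since $a\in A=\Ker\eta$, we get $0=a\eta=k\eta+\ell\eta$, so $k\eta=-\ell\eta\in U\cap W=0$, whence $k,\ell\in\Ker\eta=A$, giving $A=B\oplus C$. Purity of the summands of $A$ plays no role in this step; what it (together with finite index) underwrites is the later identification, which your write-up handles but misattributes: $B=A\cap K$ need not be pure in $K$, rather $B$ has finite index in $K$ (because $A$ has finite index in $G$), so $[B]=[K]$ and $B_*=[K]\cap G=K$ by purity of $K$ as a direct summand of $G$; then $U=K\eta\cong K/(K\cap A)=B_*/B$, and symmetrically for $W$. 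So your outline supplies some bookkeeping the paper leaves implicit, but the central implication rests on an argument that would fail as stated and must be replaced by the kernel--intersection argument above.
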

 \begin{proof} $(\Leftarrow)$ If such a splitting decomposition exists, then by Propossition \ref{TFAE} (2), $G=B_*\oplus C_*$ so $G/A=B_*/B\oplus C_*/C$..
 
 $(\Rightarrow)$ Suppose $G=K\oplus L$ with $U=K\eta$ and $W=L\eta$. Let $B=K\cap A$ and $C=L\cap A$, so $B\cap C=0$. Let $a\in A$, say $a=k+\ell$ with $k\in K$ and $\ell\in L$. Since $a\eta=0$, if $k\eta\ne 0$, then $0\ne \ell\eta=-k\eta\in U\cap W$, a contradiction. Hence 
  $B+C=A$. Thus $B\oplus C$ is a splitting decomposition of $A$ and $G/A=U\oplus W$ lifts to $B_*\oplus C_*=G$.
 \end{proof}

\begin{remark}\label{6.12} \begin{enumerate}\item  If $G$ is indecomposable, then   \cite[Example 16.8.11]{M} shows that $G\eta$ need not be.  However,  if $U$ is an indecomposable summand of $G\eta$ then  $U\eta^{-1}$ is indecomposable in $G$.

\item Since J\'onsson quotients are finite, there is a least index $e\in\bbN$ and $A\in\Jon(G)$ such that $|G/A|=e$. Such $A$ are called \textit{regulating J\'onsson bases} since in the case that $G$ is almost completely decomposable, they are regulating subgroups as defined in a different way in \cite[\S 4.2]{M}. Thus regulating J\'onsson bases are an extension to $\calV$ of regulating subgroups of acd groups.  Their properties remain to be investigated.
\end{enumerate}
\end{remark}

\begin{proposition}\label{related}  Let $G\in\calV,\ A\in\Jon(G)$ and  $T=G/A$. 
Then $T=\bigoplus_i S_i$ is a  decomposition which lifts to   a complete decomposition $\bigoplus_i H_i$ of $G$  if and only if   $A=\bigoplus_i C_i$ is a complete splitting decomposition of $A$ with $C_{i*}\cong H_i$ and each $S_i$ has no decomposition which lifts to $G$.
\end{proposition}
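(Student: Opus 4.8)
The plan is to reduce the statement to Proposition~\ref{identifies} and Corollary~\ref{converse}(2), after first upgrading Proposition~\ref{identifies} from two summands to arbitrarily many. The extended form I want reads: a decomposition $T=\bigoplus_{i\in[t]}S_i$ lifts to $G=\bigoplus_{i\in[t]}K_i$ if and only if $A$ has a splitting decomposition $A=\bigoplus_{i\in[t]}C_i$ with $K_i=C_{i*}$ and $S_i\cong C_{i*}/C_i$. I would prove this by induction on $t$, exactly as Proposition~\ref{extension} is obtained from Proposition~\ref{dirdec}: split off $S_1$ against $\bigoplus_{i>1}S_i$, apply the two-summand case to get $A=C_1\oplus D$ with $K_1=C_{1*}$ and a complementary summand $D_*$, then apply the inductive hypothesis to the pair $(D_*,D)$ and the induced decomposition of $D_*/D$. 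The only point needing attention is that pure closures computed in $D_*$ agree with those computed in $G$, which holds because $D_*$ is pure in $G$. This produces a bijection between lifting decompositions of $T$ and splitting decompositions of $A$, and it drives both implications.

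Next I would record a summand-level refinement principle. Since each $C_{i*}$ is pure in $G$, purity descends so that $C_i\in\Jon(C_{i*})$ with $S_i\cong C_{i*}/C_i$; applying Proposition~\ref{identifies} to the pair $(C_{i*},C_i)$ shows that a decomposition of the single summand $S_i$ lifts to $G$---equivalently, refines the given lift by splitting only the factor $C_{i*}$---if and only if $C_i$ has a splitting decomposition inside $C_{i*}$. Hence \emph{$S_i$ admits no lifting decomposition if and only if $C_i$ is non-split}, which by Notation~\ref{splitt} is exactly the assertion that $A=\bigoplus_i C_i$ is a complete splitting decomposition.

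With these tools the equivalence is short. For the forward direction, if $T=\bigoplus_i S_i$ lifts to the complete decomposition $\bigoplus_i H_i$, the extended Proposition~\ref{identifies} supplies a splitting decomposition $A=\bigoplus_i C_i$ with $C_{i*}=H_i$ and $S_i\cong C_{i*}/C_i$; since $\bigoplus_i C_{i*}$ is complete, Corollary~\ref{converse}(2) forces the splitting decomposition to be complete, and the refinement principle then gives that no $S_i$ has a lifting decomposition. Conversely, if $A=\bigoplus_i C_i$ is a complete splitting decomposition with $C_{i*}\cong H_i$, then $G=\bigoplus_i C_{i*}$, which Corollary~\ref{converse}(2) makes a complete decomposition, and the extended Proposition~\ref{identifies} shows that the induced decomposition $T=\bigoplus_i(C_{i*}/C_i)$, i.e.\ $\bigoplus_i S_i$, lifts to it with indecomposable summands $H_i=C_{i*}$.

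The step I expect to be the main obstacle is the summand-level refinement principle: one must argue that a decomposition of a single summand $S_i$ \lq lifting to $G$\rq\ carries the same information as a splitting decomposition of $C_i$ inside its pure closure $C_{i*}$, so that lifting at the level of $G$ restricts to lifting at the level of $C_{i*}$ and conversely. Getting the identifications $S_i\cong C_{i*}/C_i$ and $C_i\in\Jon(C_{i*})$ right, and checking that purity in $G$ descends to purity in $C_{i*}$, is where the care is needed; by contrast, the equivalence of completeness on the two sides is handed to us directly by Corollary~\ref{converse}(2).
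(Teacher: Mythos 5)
Your proof follows essentially the same route as the paper: the paper's entire proof is that the statement ``follows [from] Proposition~\ref{identifies} and a routine induction on the number of summands'', which is precisely your inductive extension of Proposition~\ref{identifies} to $t$ summands, supplemented by Corollary~\ref{converse}(2) and the reduction to the pairs $(C_{i*},\,C_i)$. Your write-up is in fact more detailed than the paper's one-sentence proof, and the points you flag as needing care---purity descending from $G$ to $C_{i*}$, and the identification $C_i\in\Jon(C_{i*})$ so that Proposition~\ref{identifies} applies summand by summand---are exactly the steps the paper leaves implicit in calling the induction ``routine''.
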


\begin{proof}   The statement follows Proposition \ref{identifies} and a routine induction on the number of summands.
 \end{proof}

We turn  now to the  action of $\Aut(G)$ on the set   of J\'onsson quotients of $G$. The proof of the following lemma is a routine application of the definitions.

\begin{lemma}\label{action of Aut} Let $A\in\Jon(G)$ and $\alpha\in\Aut(G)$. Define $\ov\alpha\colon G/A\to G/(A\alpha)$ by $\ov\alpha\colon g+A\to g\alpha+A\alpha$. Then $\ov \alpha$ is an isomorphism and $(G/A)^\alpha=(G/A)\ov\alpha$ is a group action.
\qed\end{lemma}
\begin{proposition}\label{cor} \begin{enumerate}\item  If $\eta\colon G\to G/A$ is the natural epimorphism, then \hfil\break $\alpha^{-1}\eta\alpha\colon G\to G/A\alpha$ is the natural epimorphism.
\item If $G/A=S\oplus T$ and this decomposition lifts to $G$, then $G/A\alpha=S\ov\alpha\oplus T\ov\alpha$ and this decomposition also lifts to $G$.

\item The map $\alpha\to\ov\alpha$ is a (non--abelian) group homomorphism. Its image is the group of automorphisms of $G/A$ which lift to $G$, and its kernel is $\{\alpha\in\Aut(G)\colon \alpha-1\in n\End(G)\}$ where $n=\exp(G/A)$.
\end{enumerate}\end{proposition}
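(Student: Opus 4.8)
The plan is to handle (1) and (2) as short computations built on the isomorphism $\ov\alpha$ of Lemma~\ref{action of Aut} and the lifting criterion of Proposition~\ref{identifies}, and then to concentrate the real effort on the kernel in (3).

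For (1) I would simply evaluate the composite on an element, reading composition left to right as elsewhere in the paper. For $g\in G$ we have $g\alpha^{-1}\in G$, then $(g\alpha^{-1})\eta=g\alpha^{-1}+A$, and finally $\ov\alpha$ sends this to $(g\alpha^{-1})\alpha+A\alpha=g+A\alpha$. Hence $\alpha^{-1}\eta\ov\alpha$ is precisely $g\mapsto g+A\alpha$, the natural epimorphism $G\to G/A\alpha$. For (2), since $\ov\alpha\colon G/A\to G/A\alpha$ is an isomorphism by Lemma~\ref{action of Aut}, it carries the decomposition $G/A=S\oplus T$ to the decomposition $G/A\alpha=S\ov\alpha\oplus T\ov\alpha$. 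To see the image decomposition again lifts, I would transport the lift directly: if $G=K\oplus L$ with $S=K\eta$ and $T=L\eta$, then $G=K\alpha\oplus L\alpha$, and writing $\eta'\colon G\to G/A\alpha$ for the natural epimorphism one checks $(K\alpha)\eta'=S\ov\alpha$ and $(L\alpha)\eta'=T\ov\alpha$. Equivalently, one may apply $\alpha$ to the splitting decomposition $A=B\oplus C$ supplied by Proposition~\ref{identifies} to get the splitting decomposition $A\alpha=B\alpha\oplus C\alpha$ inducing $S\ov\alpha\oplus T\ov\alpha$.

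For (3) I would first make the domain explicit: $\ov\alpha$ is an \emph{automorphism} of $G/A$ exactly when $A\alpha=A$, so the natural domain of $\alpha\mapsto\ov\alpha$ is the stabiliser $\{\alpha\in\Aut(G)\colon A\alpha=A\}$. On this subgroup the homomorphism property is a one--line verification in the right--action convention: $(g+A)\ov\alpha\,\ov\beta=(g\alpha+A)\ov\beta=(g\alpha)\beta+A=g(\alpha\beta)+A=(g+A)\ov{\alpha\beta}$, so $\ov{\alpha\beta}=\ov\alpha\,\ov\beta$. For the image, note that any $\alpha\in\Aut(G)$ for which $\ov\alpha$ is an automorphism of $G/A$ automatically satisfies $A\alpha=A$ (the codomain of $\ov\alpha$ is $G/A\alpha$), so the image is exactly the set of automorphisms of $G/A$ that lift to $G$; this is essentially the definition of lifting and needs no further argument.

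The kernel is the crux and the step I expect to fight with. Unwinding the definition, $\ov\alpha=\mathrm{id}_{G/A}$ means $g\alpha+A=g+A$ for every $g$, i.e. $G(\alpha-1)\subseteq A$, so the kernel is cleanly $\{\alpha\in\Aut(G)\colon G(\alpha-1)\subseteq A\}$. Writing $n=\exp(G/A)$ we have $nG\subseteq A$, and so one inclusion is immediate: if $\alpha-1=n\beta$ with $\beta\in\End(G)$ then $G(\alpha-1)=n(G\beta)\subseteq nG\subseteq A$, whence $\{\alpha\colon\alpha-1\in n\End(G)\}\subseteq\ker$. The hard part will be the reverse inclusion, namely deducing $G(\alpha-1)\subseteq nG$ (equivalently $\tfrac1n(\alpha-1)\in\End(G)$) from the weaker $G(\alpha-1)\subseteq A$. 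Since $nG$ is in general a proper subgroup of $A$, this cannot follow from the exponent alone, and I would expect to need the specific structure of a J\'onsson basis — the purity in $G$ of the strongly indecomposable summands $A_i$ — to close the gap, or else to record the kernel in the sharper form $\{\alpha\colon G(\alpha-1)\subseteq A\}$. Deciding precisely which of these is the correct statement is where the main work lies.
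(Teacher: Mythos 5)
Your handling of (1) and (2) is correct and is essentially the paper's own argument: the paper phrases (2) in terms of the transported projection $\mu'=\ov\alpha^{-1}\mu\ov\alpha$, but the substance is exactly your second formulation, namely applying $\alpha$ to the splitting decomposition $A=B\oplus C$ supplied by Proposition \ref{identifies} to get $A\alpha=B\alpha\oplus C\alpha$ and $G=(B\alpha)_*\oplus(C\alpha)_*$. Your remark that the homomorphism in (3) is only defined on the stabiliser $\{\alpha\in\Aut(G)\colon A\alpha=A\}$ is a point the paper passes over silently, and it is worth making.

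On the kernel, your hesitation is vindicated, and you should resolve it in favour of your \emph{sharper form}: the kernel is $\{\alpha\in\Aut(G)\colon G(\alpha-1)\subseteq A\}$, and the reverse inclusion you were trying to prove is false in general --- purity of the summands does \emph{not} close the gap. The paper's entire proof of (3) is the sentence \lq\lq this is a routine calculation\rq\rq, and it does not survive the following example. Take $G=\bbZ^2$ and $A=\bbZ(1,0)\oplus\bbZ(1,2)$. Both summands are pure rank--one (hence strongly indecomposable) subgroups of $G$ and $[G:A]=2$, so $A\in\Jon(G)$ and $n=\exp(G/A)=2$. Let $\alpha\in\Aut(G)$ be given by $(x,y)\alpha=(x+y,\,y)$. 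Then $(x,y)(\alpha-1)=(y,0)$, so $G(\alpha-1)=\bbZ(1,0)\leq A$; consequently $A\alpha=A$ and $\ov\alpha$ is the identity on $G/A$, i.e. $\alpha$ lies in the kernel. But $(0,1)(\alpha-1)=(1,0)\notin 2G$, so $\alpha-1\notin 2\End(G)$. Since torsion--freeness gives $\alpha-1\in n\End(G)$ if and only if $G(\alpha-1)\subseteq nG$, the paper's description of the kernel coincides with the true kernel exactly when $A=nG$, which is a genuinely special case. So the correct completion of your proof is: record the kernel as $\{\alpha\colon G(\alpha-1)\subseteq A\}$, keep the one inclusion $\{\alpha\colon\alpha-1\in n\End(G)\}\subseteq\Ker$ that you already established, and note that equality fails in general. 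There is no gap left for you to close; what remains is an erratum to the statement (and to the paper's \lq\lq routine calculation\rq\rq).
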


\begin{proof} (1) follows from the definition of the group action.

(2) Let $\mu\colon G/A\to S$ be the projection determined by the first decomposition. Then $\mu'=\ov\alpha^{-1}\mu\ov\alpha\colon G/A\alpha\to S\ov\alpha$ is the projection determined by the second decomposition. Since $\mu$ lifts to $G$, say $G=B_*\oplus C_*$ and $A=B\oplus C$, then $A\alpha=B\alpha\oplus C\alpha$ and $G=(B\alpha)_*\oplus (C\alpha)_*$, so  $\mu'$ is the projection which determines the second decomposition.    

(3) This is a routine calculation.
\end{proof}

Being finite,  $G/A$ has, up to isomorphism,  a unique  complete decomposition, which in general   does not lift to $G$. We define a decomposition of $G/A$ which lifts to $G$ to be \textit{unrefinable} if it has no refinement lifting to $G$. 

 \begin{theorem}\label{ } Let $G\in\calV$ and let $\calA$ be the set of
 unrefinable decompositions of J\'onsson quotients of $G$.
  $\Aut(G)$ acts on $\calA$  and the finitely many orbits of this action consist of isomorphic unrefinable decompositions.
 \end{theorem}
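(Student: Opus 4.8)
The plan is to exploit the machinery already assembled for complete decompositions of $G$ (Theorem \ref{actsonindec}) and transport it across the isomorphism between splitting decompositions of $A\in\Jon(G)$ and lifting decompositions of $G/A$ provided by Proposition \ref{identifies} and Proposition \ref{related}. First I would verify that $\Aut(G)$ genuinely acts on $\calA$: by Lemma \ref{action of Aut} each $\alpha\in\Aut(G)$ induces an isomorphism $\ov\alpha\colon G/A\to G/A\alpha$, and by Proposition \ref{cor} (2) a decomposition $G/A=S\oplus T$ that lifts to $G$ is carried to $S\ov\alpha\oplus T\ov\alpha$, which again lifts to $G$. Since passing through $\ov\alpha$ preserves the refinement relation (a refinement lifting to $G$ maps to a refinement lifting to $G$, again by Proposition \ref{cor} (2) applied summand-wise), unrefinability is preserved, so $\alpha$ sends an unrefinable decomposition of $G/A$ to an unrefinable decomposition of $G/A\alpha$. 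The group-action axioms then follow from the final clause of Lemma \ref{action of Aut}.

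Next I would establish that the action has only finitely many orbits. The key reduction is that an element of $\calA$ lives on some J\'onsson quotient $G/A$, and by Proposition \ref{6.4} (3) every $A\in\Jon(G)$ can be moved by an automorphism onto the single normalised J\'onsson basis $\bigoplus_{J\in\calJ}(J_*)^{n_J}$. Hence, up to the action, I may assume $A$ is this fixed basis, so all the relevant quotients $G/A$ coincide; since $G/A$ is a finite group it has only finitely many decompositions at all, and a fortiori only finitely many unrefinable decompositions lifting to $G$. This bounds the number of orbits by the (finite) number of unrefinable lifting decompositions of the one normalised quotient.

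Finally I would identify the orbits with isomorphism classes. By Proposition \ref{related}, an unrefinable decomposition of $G/A$ that lifts to $G$ corresponds exactly to a complete splitting decomposition $A=\bigoplus_i C_i$, hence to a complete decomposition $G=\bigoplus_i C_{i*}$. Under this correspondence Theorem \ref{actsonindec} already tells us that the $\Aut(G)$-orbits of complete decompositions are precisely the isomorphism classes; transporting this along the bijection of Proposition \ref{related}, together with the compatibility of $\ov\alpha$ with $\eta$ from Proposition \ref{cor} (1), shows that two unrefinable decompositions lie in the same orbit if and only if the induced complete decompositions of $G$ are isomorphic, which via $\ov\alpha$ is equivalent to the decompositions of the quotients being isomorphic.

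I expect the main obstacle to be the bookkeeping needed to show that unrefinability is an invariant of the action and that it corresponds \emph{exactly} (not merely up to further refinement) to completeness of the splitting decomposition of $A$; the subtlety is that $G/A$ has a unique complete decomposition that typically does \emph{not} lift, so ``unrefinable among the lifting decompositions'' is strictly coarser than ``complete in $G/A$,'' and I must check Proposition \ref{related} is being applied to the lifting-refinement order rather than the absolute one. Once that correspondence is pinned down, the finiteness and the orbit description reduce cleanly to Theorem \ref{actsonindec}.
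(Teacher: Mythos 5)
The part of your argument that the theorem actually needs is your first paragraph plus the single observation that $\ov\alpha$ is an isomorphism: that is, in essence, the paper's entire proof, which just sets $\calD\alpha:=\bigoplus_i D_i\ov\alpha$ (Lemma \ref{action of Aut}) and notes that every element of the orbit of $\calD$ is therefore isomorphic to $\calD$. Your extra care in checking that liftability and unrefinability are preserved (via Proposition \ref{cor}(2)) is a genuine improvement, since the paper never verifies that the action is well defined on $\calA$. But the detour in your last paragraph through Proposition \ref{related} and Theorem \ref{actsonindec} is both unnecessary for the asserted direction (same orbit $\Rightarrow$ isomorphic) and the source of a real error.

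Two of your steps fail. First, the claimed equivalences are false: isomorphism of the lifted complete decompositions of $G$ does not yield an automorphism moving one quotient decomposition to the other, because an $\alpha$ matching $\bigoplus_i C_{i*}$ with $\bigoplus_i C'_{i*}$ need not satisfy $A\alpha=A'$. Concretely, in $G=\bbZ^2$ take $A=2\bbZ\oplus 3\bbZ$ and $A'=4\bbZ\oplus 3\bbZ$; both lie in $\Jon(G)$ (e.g.\ $A=(2,3)\bbZ\oplus(4,3)\bbZ$ is a sum of pure rank-one subgroups of finite index), and the unrefinable decompositions $\bbZ/2\oplus\bbZ/3$ of $G/A$ and $\bbZ/4\oplus\bbZ/3$ of $G/A'$ both lift to the same complete decomposition $\bbZ\oplus\bbZ$ of $G$, yet they are not isomorphic, hence not in one orbit. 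So ``same orbit'' is strictly finer than ``isomorphic lifted decompositions''; the theorem claims only the forward implication, and you should claim no more. Second, your finiteness argument rests on Proposition \ref{6.4}(3), which cannot hold as literally stated: automorphisms preserve index, $|G/A\alpha|=|G/A|$, while J\'onsson bases of arbitrarily large index exist (in the same example, $A_n=2^n\bbZ\oplus 3\bbZ\in\Jon(\bbZ^2)$ for every $n$). Indeed these $A_n$ carry unrefinable decompositions $\bbZ/2^n\oplus\bbZ/3$ lying in pairwise distinct orbits, so normalising all J\'onsson bases to $\bigoplus_{J\in\calJ}(J_*)^{n_J}$ is not possible and your bound on the number of orbits evaporates. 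The paper's own proof is silent on finiteness, so nothing is lost relative to it, but your second paragraph should not be presented as a proof of that clause.
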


\begin{proof}  
Let $\calD\in\calA$, so $\calD=\bigoplus_{I\in[t]} D_i$ an unrefinable decomposition of $G/A$ for some $A\in\Jon(G)$. Then for all $\alpha\in\Aut(G),\ \calD$ is isomorphic to $\calD\alpha:=\bigoplus_{I\in[t]} D_i\ov\alpha$ as described in Lemma \ref{action of Aut}, so all elements of the orbit containing $\calD$ are isomorphic. 
\end{proof}

If $A'\in\Jon(G)$ it is possible that $G/A\cong G/A'$ even if $A$ and $A'$ are in different orbits of $\Aut(G)$.
In any case, the number of isomorphism classes of unrefinable J\'onsson quotients of $G$ is no greater than the finite number of orbits of $\Aut(G)$ acting on complete decompositions of $G$

 \section{Strongly indecomposable groups} \label{SIG}
 
 Strongly indecomposable groups play  a crucial r\^ole in this paper.
I am not aware of any published  classification, but several important properties and examples can be found in \cite[\S 92]{F2} and \cite[\S 9]{Fuchs}. In this section, I present a new characterisation of strongly indecomposable groups. 
Without loss of generality, we   assume $G$ is reduced and $\rank(G)>1$. Recall that a strong decomposition in $\calV$ is a direct sum of strongly indecomposable groups.
\begin{notation}\label{lifts} Let $R\leq G\in\calV$.
\begin{itemize}
\item The pair $(R,\,G)$ has \textit{Property $SI$}  if  $R$ is a strong decomposition  such that $G/R$ is an infinite torsion group with no decomposition lifting to $G$.
\item If $B\in\Bases(G)$ then $(B)_*=\bigoplus_{b\in B}b_*$.

\end{itemize}
\end{notation}

A routine calculation shows that
 Property $SI$ is invariant under $\bbQ^*\Aut(G)$.

\begin{theorem}\label{necessity} Let  $G\in\calV$. Then $G$ is strongly indecomposable if and only if  for all  $B \in\Bases(G),\ ((B)_*,\,G)$ 
has property $SI$. 

\end{theorem}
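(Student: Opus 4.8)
The plan is to separate out the two conditions hidden in Property $SI$ and verify them one at a time. First note that for any $B\in\Bases(G)$ the subgroup $(B)_*=\bigoplus_{b\in B}b_*$ is a direct sum of rank--one groups, each of which is strongly indecomposable, so $(B)_*$ is automatically a strong decomposition; moreover $(B)_*$ contains the full--rank free group $\la B\ra$, so $G/(B)_*$ is always torsion. Hence, for the pair $((B)_*,G)$, Property $SI$ asserts precisely two things: that $G/(B)_*$ is \emph{infinite}, and that no non--trivial decomposition of $G/(B)_*$ lifts to $G$.

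For the forward implication I would assume $G$ strongly indecomposable and fix $B$. If $G/(B)_*$ were finite then $(B)_*$ would be a completely decomposable subgroup of finite index, so $G\dot=(B)_*$ by Proposition \ref{qprop}(2), making $G$ almost completely decomposable; but an acd group of rank $>1$ has a proper quasi--decomposition and so cannot be strongly indecomposable, a contradiction. Thus $G/(B)_*$ is infinite. For the second clause, a proper direct decomposition is in particular a proper quasi--decomposition, so a strongly indecomposable $G$ is indecomposable; by Proposition \ref{identifies} any non--trivial decomposition of $G/(B)_*$ lifting to $G$ would produce a proper direct decomposition of $G$, which is impossible. Hence $((B)_*,G)$ has Property $SI$ for every $B$.

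For the converse I would argue by contraposition: assuming $G$ is not strongly indecomposable, I must produce a single basis $B$ for which Property $SI$ fails. By J\'onsson's Theorem \ref{Jonsson} fix a strong quasi--decomposition $G\dot=\bigoplus_{i\in[t]}A_i$ with $t\geq 2$. If every $A_i$ has rank one then $G$ is acd; choosing a non--zero $b_i$ in each pure hull $(A_i)_*$ gives a basis $B=\{b_i\}$ with $(B)_*=\bigoplus_i (A_i)_*$ of finite index, so $G/(B)_*$ is finite and the infiniteness clause already fails. In general I would take bases $B_i$ of the pure hulls $(A_i)_*$, set $B=\dot\bigcup_i B_i\in\Bases(G)$, regroup $(B)_*=\bigoplus_i\big(\bigoplus_{b\in B_i}b_*\big)$, and try to promote this grouping to a non--trivial splitting decomposition of $(B)_*$; by Proposition \ref{identifies} such a splitting is exactly a non--trivial decomposition of $G/(B)_*$ lifting to $G$, so Property $SI$ would fail. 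The hard part will be precisely here: the pure hulls satisfy only $\bigoplus_i (A_i)_*\dot= G$ (finite index) rather than equality, so the natural grouping of $(B)_*$ need not be splitting, and one must convert the quasi--splitting $G\dot=\bigoplus_i A_i$ into a genuine direct splitting realised by some basis. This is where I would concentrate the effort, exploiting the $\bbQ^*\Aut(G)$--invariance of Property $SI$ and the action of $\bbQ^*\Aut(G)$ on quasi--decompositions (Proposition \ref{5.7}) to move $B$ to an image adapted to an honest direct summand of $G$, and then invoking Proposition \ref{identifies}.
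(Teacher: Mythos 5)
Your forward direction is essentially the paper's own argument and is sound: if $G/(B)_*$ were finite then $G$ would be quasi-equal to the proper strong decomposition $(B)_*$, contradicting strong indecomposability, and a decomposition of $G/(B)_*$ lifting to $G$ would decompose $G$ itself. One quibble: you cite Proposition \ref{identifies} for the second clause, but that proposition is about $A\in\Jon(G)$, i.e.\ subgroups of \emph{finite} index in $G$, which $(B)_*$ never is here (that is the point of the first clause). The citation is also unnecessary: the definition of \lq\lq lifts to $G$\rq\rq\ already hands you a decomposition $G=K\oplus L$, which is all you need.

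The converse, however, contains a genuine gap, and you have located it yourself: from a strong quasi-decomposition $G\dot=\bigoplus_i A_i$ and bases $B_i$ of the pure hulls $[A_i]\cap G$ you only get $\bigoplus_i([A_i]\cap G)\dot= G$, not equality, so the grouping of $(B)_*$ need not split and no lifting decomposition of $G/(B)_*$ is produced; your plan to repair this with the $\bbQ^*\Aut(G)$-action is announced but never executed, so the implication is not proved (your rank-one acd case is the only case you finish). The missing idea is to change the ambient group rather than to adjust the basis inside $G$. With the paper's definition of quasi-equality, failure of strong indecomposability is witnessed by an \emph{honest} direct decomposition of a finite-index subgroup of the form $mG$: if $r(A\oplus B)=G$ with $r=a/b$, then $bG=aA\oplus aB$, so $mG=H\oplus K$ properly for $m=b$. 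Now choose $C\in\Bases(H)$ and $D\in\Bases(K)$, put $B=C\cup D\in\Bases(mG)$, and apply Proposition \ref{dirdec} inside $mG$: the partition $C\dot\cup D$ is splitting, so $mG/(B)_*$ has a summand lifting to the summand of $mG$ generated by $C$, and Property $SI$ fails for the pair $((B)_*,\,mG)$. Finally, multiplication by $1/m$ lies in $\bbQ^*\Aut([G])$, carries $mG$ to $G$ and $B$ to a basis of $G$, and Property $SI$ is invariant under this action (as noted after Notation \ref{lifts}); hence Property $SI$ fails for some basis of $G$, which is exactly the contrapositive you needed. This passage to $mG$, where the quasi-splitting becomes a genuine splitting, is precisely the step your pure-hull approach cannot supply.
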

\begin{proof} $(\Longrightarrow)$ Let  $B\in\Bases(G)$, so $(B)_*$ is a strong decomposition and $G/(B)_*$ is torsion. If $G/(B)_*$ is finite, then for some $m\in\bbN,\ mG=(B)_*$, a contradiction, so $G/(B)_*$ is infinite. If some summand of $G/(B)_*$ lifts to $G$ then $G$ is decomposable, another contradiction. Hence $((B)_*\,,G)$ has Property $SI$.

$(\Longleftarrow)$ Suppose that for all $B\in\Bases(G),\ ((B)_*,\,G)$ has property $SI$, but that for some $m\in\bbN,\ mG=H\oplus K$. Let $C\in\bases(H)$ and $D\in\bases(K)$, so that by Proposition \ref{dirdec}, $mG=C_*\oplus D_*$.  Let $B=C\cup D$. Then   $mG/(B)_*$ has a summand $C_*/B_*$ lifting to the summand $C_*$ of $G$, a contradiction.  Hence no basis of $mG$ has a splitting partition, so $G$ is strongly indecomposable.
\end{proof}


A   generalisation of Theorem \ref{necessity}, with essentially  the same proof can be obtained by replacing the strong decomposition $(B)_*$ by an arbitrary full strong decomposition   contained in $G$.
 
\begin{theorem} Let $G\in\calV$. Then $G$ is strongly indecomposable if and only if for all full strong decompositions $D$  contained in $G,\ (D,\,G)$ has property $SI$.
\qed\end{theorem}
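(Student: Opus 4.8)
The plan is to follow the forward half of the proof of Theorem~\ref{necessity} almost verbatim, with an arbitrary full strong decomposition $D=\bigoplus_i A_i$ in place of the distinguished decomposition $(B)_*$, and to treat the converse by exploiting that the hypothesis now ranges over \emph{all} full strong decompositions, in particular over those of finite index. In contrast to Theorem~\ref{necessity}, no passage to a subgroup $mG$, and hence no appeal to the invariance of Property $SI$ under $\bbQ^*\Aut(G)$, will be required.

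For the forward implication, assume $G$ is strongly indecomposable and let $D=\bigoplus_i A_i$ be a full strong decomposition contained in $G$. Then $D$ is a strong decomposition and, since $D$ is full, $G/D$ is torsion. If $G/D$ were finite we would have $G\dot= D=\bigoplus_i A_i$, a proper quasi-decomposition of $G$ (the $A_i$ being at least two strongly indecomposable groups), contradicting strong indecomposability; hence $G/D$ is an infinite torsion group. Finally, if some decomposition of $G/D$ lifted to a decomposition $G=K\oplus L$, then $G$ would be directly decomposable, hence quasi-decomposable, again a contradiction. Thus $(D,G)$ has Property $SI$.

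For the converse I argue by contraposition and suppose $G$ is not strongly indecomposable. By J\'onsson's Theorem the strong quasi-decompositions of $G$ all have the same number $t\geq 2$ of strongly indecomposable quasi-summands; fixing $\calJ\in\Rep(G)$, Proposition~\ref{6.4}(1) produces a full strong decomposition $D=\bigoplus_{J\in\calJ}(J_*)^{n_J}\in\Jon(G)$ with $\sum_{J\in\calJ}n_J=t\geq 2$ summands. Being a J\'onsson basis, $D$ has finite index, so $G/D$ is finite and in particular not an infinite torsion group; therefore $(D,G)$ fails Property $SI$. Hence not every full strong decomposition of $G$ satisfies Property $SI$, which is exactly the contrapositive required.

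The forward direction is a routine transcription of the corresponding part of Theorem~\ref{necessity}, so I expect the only substantive point to be the converse. The generalisation genuinely uses more than the bases $(B)_*$: the enlarged supply of full strong decompositions always contains a finite-index member --- a J\'onsson basis with at least two summands --- as soon as $G$ is decomposable in the quasi-category, and such a member automatically violates the requirement in Property $SI$ that $G/D$ be infinite. The ingredient that makes this step work is J\'onsson's Theorem, entering through Proposition~\ref{6.4}(1); this is precisely the point at which the argument outstrips the basis-only formulation of Theorem~\ref{necessity}.
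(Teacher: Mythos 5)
Your proof is correct, so let me compare it with the paper's. The paper proves this theorem by declaring it ``essentially the same proof'' as Theorem \ref{necessity}: the forward half is exactly what you wrote (with $D$ in place of $(B)_*$), and the converse is essentially a reduction to Theorem \ref{necessity}, since every $(B)_*$ with $B\in\Bases(G)$ is itself a full strong decomposition contained in $G$ (each $b_*$ has rank one, hence is strongly indecomposable), so the present hypothesis subsumes that of Theorem \ref{necessity}, whose converse --- the splitting-basis argument inside a finite-index subgroup $mG$, resting on the invariance of Property $SI$ under $\bbQ^*\Aut(G)$ --- then yields strong indecomposability. Your converse is genuinely different: arguing contrapositively, you use J\'onsson's Theorem to force every strong quasi-decomposition of a non-strongly-indecomposable $G$ to have $t\geq 2$ summands, and Proposition \ref{6.4}(1) to produce a J\'onsson basis $D\in\Jon(G)$, a full strong decomposition of finite index with $t\geq 2$ summands, so that $G/D$ is finite and $(D,G)$ fails the infinite-torsion clause of Property $SI$. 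This is valid and arguably cleaner: it bypasses Theorem \ref{necessity}, the passage to $mG$, and the $\bbQ^*\Aut(G)$-invariance entirely, and it isolates exactly why the enlarged hypothesis makes the converse easy --- finite-index strong decompositions exist as soon as $G$ is quasi-decomposable, and they automatically violate $SI$; the cost is reliance on the heavier machinery of J\'onsson's Theorem, which the paper's route does not need. One point both you and the paper use silently but which you rightly make explicit in your parenthetical: ``strong decomposition'' must be read under the paper's standing convention as a non-trivial direct sum (at least two summands), since otherwise $D=G$ itself would be a full strong decomposition with finite quotient and the forward implication, indeed the theorem itself, would fail.
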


 \end{document}